\numberwithin{equation}{section}
\newtheorem{definition}{Definition}[section]
\newtheorem{theorem}{Theorem}[section]
\newtheorem{corollary}{Corollary}[section]
\newtheorem{proposition}{Proposition}[section]
\newtheorem{remark}{Remark}[section]
\begin{document}
\title{Representation theorem for generators of BSDEs driven by $G$-Brownian motion
and its applications}

\footnote[0]{${}^{*}$M. Hu acknowledges the financial support from the National Natural Science
Foundation of China (11201262 and 11101242). \\
K. He acknowledges the financial support from the National Natural Science
Foundation of China (Grant No. 10971220).}

\author[K. He and M. Hu]{Kun He
and Mingshang Hu${}^2$}


\date{}

\subjclass[2000]{60H07, 60H20}
\maketitle
\begin{center}
{\footnotesize {\it 1. Department of Mathematics, Donghua University\\
2999 North Renmin Rd., Songjiang, Shanghai 201620, P.R. China\\
2. School of Mathematics, Shandong University\\27 Shanda Nanlu, Jinan {250100}, P.R. China}}
\end{center}

{\bf Abstract:}
We obtain a representation theorem for the generators of BSDEs driven by G-Brownian motions, and then we use the representation theorem to get a converse comparison theorem
 for G-BSDEs and some equivalent results for nonlinear expectations generated by G-BSDEs.

{\bf keywords: }{G-Brownian motion, BSDEs, representation theorem}
\section{Introduction}

 \quad ~~ In 1997, Peng (\cite{Peng1997}) introduced $g-$expectations basing on Backward Stochastic Differential Equations (BSDEs) (\cite{PP90}).
One of the important properties of $g-$expectations is comparison theorem or monotonicity. Chen (\cite{Chen1998}) first consider a converse
result of BSDEs under equal case. After that Briand et. al. (\cite{BCHMP}) obtained a converse comparison theorem for BSDEs under general case. They also derived a representation theorem
for the generator $g$. Following this paper, Jiang (\cite{Jiang2005}) discussed
a more general representation theorem, then in his another paper (\cite{Jiang2005b}) showed a more general converse
comparison theorem. Here the representation theorem is an important method in solving the converse comparison problem and other problems (see Jiang \cite{Jiang2008}).

Recently, Hu et. al. (\cite{HJPS}) proved an existence and uniqueness result on BSDEs driven by $G-$Brownian motions (G-BSDEs), further (in \cite{HJPS1}) they gave
a comparison theorem for G-BSDEs. In this paper we consider the representation theorem for generators of G-BSDEs, and then consider the converse comparison
theorem of G-BSDEs and some equivalent results for nonlinear expectations generated by G-BSDEs. In the following, In Section 2, we review some basic concepts and results about $G-$expectations.
We give the representation theorem of G-BSDEs in Section 3; In Section 4, we consider the applications of representation theorem of G-BSDEs, which contain the converse
comparison theorem and some equivalent results for nonlinear expectations generated by G-BSDEs.

\section{Preliminaries}
We review some basic notions and results of $G$-expectation, the related
spaces of random variables and the backward stochastic differential equations
driven by a $G$-Browninan motion. The readers may refer to \cite{HJPS},
\cite{P07a}, \cite{P07b}, \cite{P08a}, \cite{P08b}, \cite{P10} for more details.

\begin{definition}\label{def2.1}
Let $\Omega$ be a given set and let $\mathcal{H}$ be a vector
lattice of real valued functions defined on $\Omega$, namely $c\in \mathcal{H}$
for each constant $c$ and $|X|\in \mathcal{H}$ if $X\in \mathcal{H}$.
$\mathcal{H}$ is considered as the space of random variables. A sublinear
expectation $\mathbb{\hat{E}}$ on $\mathcal{H}$ is a functional $\mathbb{\hat
{E}}:\mathcal{H}\rightarrow \mathbb{R}$ satisfying the following properties:
for all $X,Y\in \mathcal{H}$, we have
\item[(a)] Monotonicity: If $X\geq Y$ then $\mathbb{\hat{E}}[X]\geq
\mathbb{\hat{E}}[Y]$;
\item[(b)] Constant preservation: $\mathbb{\hat{E}}[c]=c$;
\item[(c)] Sub-additivity: $\mathbb{\hat{E}}[X+Y]\leq \mathbb{\hat{E}
}[X]+\mathbb{\hat{E}}[Y]$;

\item[(d)] Positive homogeneity: $\mathbb{\hat{E}}[\lambda X]=\lambda
\mathbb{\hat{E}}[X]$ for each $\lambda \geq 0$.
$(\Omega,\mathcal{H},\mathbb{\hat{E}})$ is called a sublinear expectation space.
\end{definition}

\begin{definition}
\label{def2.2} Let $X_{1}$ and $X_{2}$ be two $n$-dimensional random vectors
defined respectively in sublinear expectation spaces $(\Omega_{1}%
,\mathcal{H}_{1},\mathbb{\hat{E}}_{1})$ and $(\Omega_{2},\mathcal{H}%
_{2},\mathbb{\hat{E}}_{2})$. They are called identically distributed, denoted
by $X_{1}\overset{d}{=}X_{2}$, if $\mathbb{\hat{E}}_{1}[\varphi(X_{1}%
)]=\mathbb{\hat{E}}_{2}[\varphi(X_{2})]$, for all$\  \varphi \in C_{b.Lip}%
(\mathbb{R}^{n})$, where $C_{b.Lip}(\mathbb{R}^{n})$ denotes the space of
bounded and Lipschitz functions on $\mathbb{R}^{n}$.
\end{definition}

\begin{definition}
\label{def2.3} In a sublinear expectation space $(\Omega,\mathcal{H}%
,\mathbb{\hat{E}})$, a random vector $Y=(Y_{1},\cdot \cdot \cdot,Y_{n})$,
$Y_{i}\in \mathcal{H}$, is said to be independent of another random vector
$X=(X_{1},\cdot \cdot \cdot,X_{m})$, $X_{i}\in \mathcal{H}$ under $\mathbb{\hat
{E}}[\cdot]$, denoted by $Y\bot X$, if for every test function $\varphi \in
C_{b.Lip}(\mathbb{R}^{m}\times \mathbb{R}^{n})$ we have $\mathbb{\hat{E}%
}[\varphi(X,Y)]=\mathbb{\hat{E}}[\mathbb{\hat{E}}[\varphi(x,Y)]_{x=X}]$.
\end{definition}

\begin{definition}
\label{def2.4} ($G$-normal distribution) A $d$-dimensional random vector
$X=(X_{1},\cdot \cdot \cdot,X_{d})$ in a sublinear expectation space
$(\Omega,\mathcal{H},\mathbb{\hat{E}})$ is called $G$-normally distributed if
for each $a,b\geq0$ we have
\[
aX+b\bar{X}\overset{d}{=}\sqrt{a^{2}+b^{2}}X,
\]
where $\bar{X}$ is an independent copy of $X$, i.e., $\bar{X}\overset{d}{=}X$
and $\bar{X}\bot X$. Here the letter $G$ denotes the function
\[
G(A):=\frac{1}{2}\mathbb{\hat{E}}[\langle AX,X\rangle]:\mathbb{S}%
_{d}\rightarrow \mathbb{R},
\]
where $\mathbb{S}_{d}$ denotes the collection of $d\times d$ symmetric matrices.
\end{definition}

Peng \cite{P08b} showed that $X=(X_{1},\cdot \cdot \cdot,X_{d})$ is $G$-normally
distributed if and only if for each $\varphi \in C_{b.Lip}(\mathbb{R}^{d})$,
$u(t,x):=\mathbb{\hat{E}}[\varphi(x+\sqrt{t}X)]$, $(t,x)\in \lbrack
0,\infty)\times \mathbb{R}^{d}$, is the solution of the following $G$-heat
equation:%
\[
\partial_{t}u-G(D_{x}^{2}u)=0,\ u(0,x)=\varphi(x).
\]

The function $G(\cdot):\mathbb{S}_{d}\rightarrow \mathbb{R}$ is a monotonic,
sublinear mapping on $\mathbb{S}_{d}$ and $G(A)=\frac{1}{2}\mathbb{\hat{E}%
}[\langle AX,X\rangle]\leq \frac{1}{2}|A|\mathbb{\hat{E}}[|X|^{2}]$ implies
that there exists a bounded, convex and closed subset $\Gamma \subset
\mathbb{S}_{d}^{+}$ such that
\[
G(A)=\frac{1}{2}\sup_{\gamma \in \Gamma}\mathrm{tr}[\gamma A],
\]
where $\mathbb{S}_{d}^{+}$ denotes the collection of nonnegative elements in
$\mathbb{S}_{d}$.

In this paper, we only consider non-degenerate $G$-normal distribution, i.e.,
there exists some $\underline{\sigma}^{2}>0$ such that $G(A)-G(B)\geq
\underline{\sigma}^{2}\mathrm{tr}[A-B]$ for any $A\geq B$.

\begin{definition}
\label{def2.5} i) Let $\Omega=C_{0}^{d}(\mathbb{R}^{+})$ denote the space of
$\mathbb{R}^{d}$-valued continuous functions on $[0,\infty)$ with $\omega
_{0}=0$ and let $B_{t}(\omega)=\omega_{t}$ be the canonical process. Set
\[
L_{ip}(\Omega):=\{ \varphi(B_{t_{1}},...,B_{t_{n}}):n\geq1,t_{1},...,t_{n}%
\in \lbrack0,\infty),\varphi \in C_{b.Lip}(\mathbb{R}^{d\times n})\}.
\]
Let $G:\mathbb{S}_{d}\rightarrow \mathbb{R}$ be a given monotonic and sublinear
function. $G$-expectation is a sublinear expectation defined by
\[
\mathbb{\hat{E}}[X]=\mathbb{\tilde{E}}[\varphi(\sqrt{t_{1}-t_{0}}\xi_{1}%
,\cdot \cdot \cdot,\sqrt{t_{m}-t_{m-1}}\xi_{m})],
\]
for all $X=\varphi(B_{t_{1}}-B_{t_{0}},B_{t_{2}}-B_{t_{1}},\cdot \cdot
\cdot,B_{t_{m}}-B_{t_{m-1}})$, where $\xi_{1},\cdot \cdot \cdot,\xi_{n}$ are
identically distributed $d$-dimensional $G$-normally distributed random
vectors in a sublinear expectation space $(\tilde{\Omega},\tilde{\mathcal{H}%
},\mathbb{\tilde{E}})$ such that $\xi_{i+1}$ is independent of $(\xi_{1}%
,\cdot \cdot \cdot,\xi_{i})$ for every $i=1,\cdot \cdot \cdot,m-1$. The
corresponding canonical process $B_{t}=(B_{t}^{i})_{i=1}^{d}$ is called a
$G$-Brownian motion.

ii) For each fixed $t\in \lbrack0,\infty)$, the conditional $G$-expectation
$\mathbb{\hat{E}}_{t}$ for $\xi=\varphi(B_{t_{1}}-B_{t_{0}},B_{t_{2}}%
-B_{t_{1}},\cdot \cdot \cdot,B_{t_{m}}-B_{t_{m-1}})\in L_{ip}(\Omega)$, without
loss of generality we suppose $t_{i}=t$, is defined by
\[
\mathbb{\hat{E}}_{t}[\varphi(B_{t_{1}}-B_{t_{0}},B_{t_{2}}-B_{t_{1}}%
,\cdot \cdot \cdot,B_{t_{m}}-B_{t_{m-1}})]
\]%
\[
=\psi(B_{t_{1}}-B_{t_{0}},B_{t_{2}}-B_{t_{1}},\cdot \cdot \cdot,B_{t_{i}%
}-B_{t_{i-1}}),
\]
where
\[
\psi(x_{1},\cdot \cdot \cdot,x_{i})=\mathbb{\hat{E}}[\varphi(x_{1},\cdot
\cdot \cdot,x_{i},B_{t_{i+1}}-B_{t_{i}},\cdot \cdot \cdot,B_{t_{m}}-B_{t_{m-1}%
})].
\]

\end{definition}

For each fixed $T>0$, we set%
\[
L_{ip}(\Omega_{T}):=\{ \varphi(B_{t_{1}},...,B_{t_{n}}):n\geq1,t_{1}%
,...,t_{n}\in \lbrack0,T],\varphi \in C_{b.Lip}(\mathbb{R}^{d\times n})\}.
\]
For each $p\geq1$, we denote by $L_{G}^{p}(\Omega)$ (resp. $L_{G}^{p}%
(\Omega_{T})$) the completion of $L_{ip}(\Omega)$ (resp. $L_{ip}(\Omega_{T})$)
under the norm $\Vert \xi \Vert_{p,G}=(\mathbb{\hat{E}}[|\xi|^{p}])^{1/p}$. It
is easy to check that $L_{G}^{q}(\Omega)\subset L_{G}^{p}(\Omega)$ for $1\leq
p\leq q$ and $\mathbb{\hat{E}}_{t}[\cdot]$ can be extended continuously to
$L_{G}^{1}(\Omega)$. .

For each fixed $\mathbf{a}\in \mathbb{R}^{d}$, $B_{t}^{\mathbf{a}}%
=\langle \mathbf{a},B_{t}\rangle$ is a $1$-dimensional $G_{\mathbf{a}}%
$-Brownian motion, where $G_{\mathbf{a}}(\alpha)=\frac{1}{2}(\sigma
_{\mathbf{aa}^{T}}^{2}\alpha^{+}-\sigma_{-\mathbf{aa}^{T}}^{2}\alpha^{-})$,
$\sigma_{\mathbf{aa}^{T}}^{2}=2G(\mathbf{aa}^{T})$, $\sigma_{-\mathbf{aa}^{T}%
}^{2}=-2G(-\mathbf{aa}^{T})$. Let $\pi_{t}^{N}=\{t_{0}^{N},\cdots,t_{N}^{N}%
\}$, $N=1,2,\cdots$, be a sequence of partitions of $[0,t]$ such that $\mu
(\pi_{t}^{N})=\max \{|t_{i+1}^{N}-t_{i}^{N}|:i=0,\cdots,N-1\} \rightarrow0$,
the quadratic variation process of $B^{\mathbf{a}}$ is defined by%
\[
\langle B^{\mathbf{a}}\rangle_{t}=\lim_{\mu(\pi_{t}^{N})\rightarrow0}%
\sum_{j=0}^{N-1}(B_{t_{j+1}^{N}}^{\mathbf{a}}-B_{t_{j}^{N}}^{\mathbf{a}}%
)^{2}.
\]
For each fixed $\mathbf{a}$, $\mathbf{\bar{a}}\in \mathbb{R}^{d}$, the mutual
variation process of $B^{\mathbf{a}}$ and $B^{\mathbf{\bar{a}}}$ is defined by%
\[
\langle B^{\mathbf{a}},B^{\mathbf{\bar{a}}}\rangle_{t}=\frac{1}{4}[\langle
B^{\mathbf{a}+\mathbf{\bar{a}}}\rangle_{t}-\langle B^{\mathbf{a}%
-\mathbf{\bar{a}}}\rangle_{t}].
\]

\begin{definition}
\label{def2.6} For fixed $T>0$, let $M_{G}^{0}(0,T)$ be the collection of
processes in the following form: for a given partition $\{t_{0},\cdot
\cdot \cdot,t_{N}\}=\pi_{T}$ of $[0,T]$,
\[
\eta_{t}(\omega)=\sum_{j=0}^{N-1}\xi_{j}I_{[t_{j},t_{j+1})}(t),
\]
where $\xi_{j}\in L_{ip}(\Omega_{t_{j}})$, $j=0,1,2,\cdot \cdot \cdot,N-1$. For
$p\geq1$, we denote by $H_{G}^{p}(0,T)$, $M_{G}^{p}(0,T)$ the completion of
$M_{G}^{0}(0,T)$ under the norms $\Vert \eta \Vert_{H_{G}^{p}}=\{ \mathbb{\hat
{E}}[(\int_{0}^{T}|\eta_{s}|^{2}ds)^{p/2}]\}^{1/p}$, $\Vert \eta \Vert
_{M_{G}^{p}}=\{ \mathbb{\hat{E}}[\int_{0}^{T}|\eta_{s}|^{p}ds]\}^{1/p}$ respectively.
\end{definition}

For each $\eta \in M_{G}^{1}(0,T)$, we can define the integrals $\int_{0}%
^{T}\eta_{t}dt$ and $\int_{0}^{T}\eta_{t}d\langle B^{\mathbf{a}}%
,B^{\mathbf{\bar{a}}}\rangle_{t}$ for each $\mathbf{a}$, $\mathbf{\bar{a}}%
\in \mathbb{R}^{d}$. For each $\eta \in H_{G}^{p}(0,T;\mathbb{R}^{d})$ with
$p\geq1$, we can define It\^{o}'s integral $\int_{0}^{T}\eta_{t}dB_{t}$.

Let $S_{G}^{0}(0,T)=\{h(t,B_{t_{1}\wedge t},\cdot \cdot \cdot,B_{t_{n}\wedge
t}):t_{1},\ldots,t_{n}\in \lbrack0,T],h\in C_{b,Lip}(\mathbb{R}^{n+1})\}$. For
$p\geq1$ and $\eta \in S_{G}^{0}(0,T)$, set $\Vert \eta \Vert_{S_{G}^{p}}=\{
\mathbb{\hat{E}}[\sup_{t\in \lbrack0,T]}|\eta_{t}|^{p}]\}^{\frac{1}{p}}$.
Denote by $S_{G}^{p}(0,T)$ the completion of $S_{G}^{0}(0,T)$ under the norm
$\Vert \cdot \Vert_{S_{G}^{p}}$.

We consider the following type of $G$-BSDEs (in this paper we always use
Einstein convention):%
\begin{align}
Y_{t}  &  =\xi+\int_{t}^{T}f(s,Y_{s},Z_{s})ds+\int_{t}^{T}g_{ij}(s,Y_{s}%
,Z_{s})d\langle B^{i},B^{j}\rangle_{s}\nonumber \\
&  -\int_{t}^{T}Z_{s}dB_{s}-(K_{T}-K_{t}), \label{pr-eq1}%
\end{align}%
where%

\[
f(t,\omega,y,z),g_{ij}(t,\omega,y,z):[0,T]\times \Omega_{T}\times
\mathbb{R}\times \mathbb{R}^{d}\rightarrow \mathbb{R}%
\]
satisfy the following properties:

\begin{description}
\item[(H1)] There exists some $\beta>1$ such that for any $y,z$,
$f(\cdot,\cdot,y,z),g_{ij}(\cdot,\cdot,y,z)\in M_{G}^{\beta}(0,T)$.

\item[(H2)] There exists some $L>0$ such that
\[
|f(t,y,z)-f(t,y^{\prime},z^{\prime})|+\sum_{i,j=1}^{d}|g_{ij}(t,y,z)-g_{ij}%
(t,y^{\prime},z^{\prime})|\leq L(|y-y^{\prime}|+|z-z^{\prime}|).
\]

\end{description}

For simplicity, we denote by $\mathfrak{S}_{G}^{\alpha}(0,T)$ the collection
of processes $(Y,Z,K)$ such that $Y\in S_{G}^{\alpha}(0,T)$, $Z\in
H_{G}^{\alpha}(0,T;\mathbb{R}^{d})$, $K$ is a decreasing $G$-martingale with
$K_{0}=0$ and $K_{T}\in L_{G}^{\alpha}(\Omega_{T})$.

\begin{definition}
\label{def3.1} Let $\xi \in L_{G}^{\beta}(\Omega_{T})$ and $f$ satisfy (H1) and
(H2) for some $\beta>1$. A triplet of processes $(Y,Z,K)$ is called a solution
of equation (\ref{pr-eq1}) if for some $1<\alpha \leq \beta$ the following
properties hold:

\begin{description}
\item[(a)] $(Y,Z,K)\in \mathfrak{S}_{G}^{\alpha}(0,T)$;

\item[(b)] $Y_{t}=\xi+\int_{t}^{T}f(s,Y_{s},Z_{s})ds+\int_{t}^{T}%
g_{ij}(s,Y_{s},Z_{s})d\langle B^{i},B^{j}\rangle_{s}-\int_{t}^{T}Z_{s}%
dB_{s}-(K_{T}-K_{t})$.
\end{description}
\end{definition}

\begin{theorem}
\label{the1.1} (\cite{HJPS}) Assume that $\xi \in L_{G}^{\beta}(\Omega_{T})$
and $f$, $g_{ij}$ satisfy (H1) and (H2) for some $\beta>1$. Then equation
(\ref{pr-eq1}) has a unique solution $(Y,Z,K)$. Moreover, for any $1<\alpha<\beta$
we have $Y\in S_{G}^{\alpha}(0,T)$, $Z\in H_{G}^{\alpha}(0,T;\mathbb{R}^{d})$
and $K_{T}\in L_{G}^{\alpha}(\Omega_{T})$.
\end{theorem}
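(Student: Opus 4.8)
The statement is quoted from \cite{HJPS}; I sketch the strategy I would follow. The plan is to first solve the equation when the coefficients $f,g_{ij}$ are frozen (do not depend on $(y,z)$), then to prove a priori estimates, and finally to close a fixed point argument in the general case.

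\textbf{Step 1 (frozen coefficients).} Assume $f(s,y,z)=f_s$ and $g_{ij}(s,y,z)=g^{ij}_s$ with $f_\cdot,g^{ij}_\cdot\in M_G^\beta(0,T)$, so the unknowns disappear from the drivers. I would introduce the $G$-martingale
\[
X_t:=\mathbb{\hat{E}}_t\Big[\xi+\int_0^T f_s\,ds+\int_0^T g^{ij}_s\,d\langle B^i,B^j\rangle_s\Big],
\]
whose terminal value lies in $L_G^\beta(\Omega_T)$, and invoke the decomposition theorem for $G$-martingales (Song; Peng--Song--Zhang): for every $1<\alpha<\beta$ there exist $Z\in H_G^\alpha(0,T;\mathbb{R}^d)$ and a decreasing $G$-martingale $K$ with $K_0=0$, $K_T\in L_G^\alpha(\Omega_T)$, such that $X_t=X_0+\int_0^t Z_s\,dB_s-K_t$. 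Setting $Y_t:=X_t-\int_0^t f_s\,ds-\int_0^t g^{ij}_s\,d\langle B^i,B^j\rangle_s$, one checks $Y_T=\xi$ and that $(Y,Z,K)$ satisfies (b) of Definition~\ref{def3.1} for equation (\ref{pr-eq1}); the bound $Y\in S_G^\alpha(0,T)$ comes from this representation together with the $G$-Burkholder--Davis--Gundy inequality.

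\textbf{Step 2 (a priori estimates).} For a solution $(Y,Z,K)$ I would apply the $G$-It\^{o} formula to $|Y_t|^2$ (and, for $1<\alpha<2$, to $e^{\lambda t}(|Y_t|^2+\varepsilon)^{\alpha/2}$ with a suitable $\lambda$), use the non-degeneracy $G(A)-G(B)\ge\underline\sigma^2\,\mathrm{tr}[A-B]$ to bound $\int_0^T|Z_s|^2\,d\langle B\rangle_s$ below by a multiple of $\int_0^T|Z_s|^2\,ds$, and absorb the It\^{o} integral via $G$-BDG. This should give, for every $1<\alpha<\beta$, a control of $\mathbb{\hat{E}}[\sup_t|Y_t|^\alpha]$, $\mathbb{\hat{E}}[(\int_0^T|Z_s|^2ds)^{\alpha/2}]$ and $\mathbb{\hat{E}}[|K_T|^\alpha]$ by a constant depending only on $\alpha,\beta,L,T,\underline\sigma$ times $\mathbb{\hat{E}}[|\xi|^\beta+(\int_0^T|f(s,0,0)|+\sum_{i,j}|g_{ij}(s,0,0)|\,ds)^\beta]^{\alpha/\beta}$, the bound on $K_T$ being read off last from the equation itself. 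The same computation applied to the difference of two solutions, together with (H2), yields the difference estimate.

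\textbf{Step 3 (fixed point and uniqueness).} For general $f,g_{ij}$ I would set $\Gamma(y,z):=(Y,Z)$, where $(Y,Z,K)$ is the Step~1 solution of (\ref{pr-eq1}) with drivers $f(s,y_s,z_s)$, $g_{ij}(s,y_s,z_s)$; by (H1)--(H2) these lie in $M_G^\beta(0,T)$. Using the difference estimate of Step~2 and the Lipschitz constant $L$, I would show $\Gamma$ is a contraction, either after weighting by $e^{\lambda s}$ with $\lambda$ large or by first solving on $[T-\delta,T]$ with $\delta=\delta(L,\underline\sigma)$ small and patching finitely many intervals. The fixed point is the unique solution, with $Y\in S_G^\alpha$, $Z\in H_G^\alpha$, $K_T\in L_G^\alpha$ for all $1<\alpha<\beta$ by Steps~1--2; uniqueness also follows directly from the difference estimate. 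The hard part is Step~1: the decomposition theorem for $G$-martingales is far more delicate than the classical martingale representation theorem, and it is there that integrability is lost — $\alpha=\beta$ is not reachable in general — which is exactly why the conclusion is stated only for $\alpha<\beta$. Controlling the decreasing part $K$ in $L_G^\alpha$, rather than merely as a finite-variation process, is what makes the solution unique, and is the other point that requires real work.
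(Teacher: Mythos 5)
This theorem is stated in the paper without proof (it is quoted from \cite{HJPS}), so your sketch has to be measured against the proof given there. Your Steps 1 and 2 are consistent with the actual ingredients: the case of drivers independent of $(y,z)$ is indeed handled by applying the $G$-martingale decomposition theorem to $\mathbb{\hat{E}}_t[\xi+\int_0^T f_s\,ds+\int_0^T g^{ij}_s\,d\langle B^i,B^j\rangle_s]$, and that decomposition is exactly where the integrability drops from $\beta$ to $\alpha<\beta$; the a priori estimates you describe are the ones recalled here as Proposition \ref{pro3.4}, and the difference estimate does give uniqueness.

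The genuine gap is Step 3. The Picard/contraction scheme is precisely what cannot be closed in the $G$-framework, and this obstacle is the reason the existence proof in \cite{HJPS} looks the way it does. In the estimate for the difference of two solutions, the $H_G^\alpha$-norm of $Z^1-Z^2$ (and the estimate for $K^1-K^2$, which is not a decreasing $G$-martingale and cannot be absorbed) is controlled only by $\mathbb{\hat{E}}[\sup_t|Y^1_t-Y^2_t|^\alpha]$ plus a cross term of the form $\bigl(\mathbb{\hat{E}}[\sup_t|Y^1_t-Y^2_t|^{\alpha}]\bigr)^{1/2}\bigl(\mathbb{\hat{E}}[|K^1_T|^{\alpha}]+\mathbb{\hat{E}}[|K^2_T|^{\alpha}]+\cdots\bigr)^{1/2}$. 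The factor involving $|K^l_T|$ is uniformly bounded along the iteration but not small, and it does not shrink when you restrict to a small interval or insert an exponential weight, because the bound on $K$-increments involves $\sup_t|Y_t|$, which stays of the order of the data. Consequently the iteration only yields an inequality of the type $\delta_{n+1}\le c(\delta_n+\sqrt{\delta_n\,})$, whose fixed point need not be zero: no contraction, and the scheme does not converge by these estimates. The proof in \cite{HJPS} therefore takes a different route for existence: solutions are first constructed for cylindrical data $\xi=\varphi(B_{t_1},\dots,B_{t_n})$ and drivers of the same form, interval by interval, through the nonlinear Feynman--Kac correspondence with a fully nonlinear PDE (regularity obtained by approximation together with Krylov-type interior estimates), which produces $K$ explicitly from It\^o's formula as the gap between the $G(\cdot)\,ds$ term and the $d\langle B\rangle$ term; the general case then follows by approximating $\xi$, $f$, $g_{ij}$ and passing to the limit using exactly the a priori estimates of your Step 2. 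So Steps 1--2 and the uniqueness claim are fine, but as written the existence argument does not go through.
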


We have the following estimates.

\begin{proposition}
\label{pro3.4} (\cite{HJPS}) Let $\xi \in L_{G}^{\beta}(\Omega_{T})$ and $f$,
$g_{ij}$ satisfy (H1) and (H2) for some $\beta>1$. Assume that $(Y,Z,K)\in
\mathfrak{S}_{G}^{\alpha}(0,T)$ for some $1<\alpha<\beta$ is a solution of
equation (\ref{pr-eq1}). Then there exists a constant $C_{\alpha}>0$ depending on
$\alpha$, $T$, $G$, $L$ such that%
\[
|Y_{t}|^{\alpha}\leq C_{\alpha}\mathbb{\hat{E}}_{t}[|\xi|^{\alpha}+(\int
_{t}^{T}|h_{s}^{0}|ds)^{\alpha}],
\]%
\[
\mathbb{\hat{E}}[(\int_{0}^{T}|Z_{s}|^{2}ds)^{\frac{\alpha}{2}}]\leq
C_{\alpha}\{ \mathbb{\hat{E}}[\sup_{t\in \lbrack0,T]}|Y_{t}|^{\alpha
}]+(\mathbb{\hat{E}}[\sup_{t\in \lbrack0,T]}|Y_{t}|^{\alpha}])^{\frac{1}{2}%
}(\mathbb{\hat{E}}[(\int_{0}^{T}h_{s}^{0}ds)^{\alpha}])^{\frac{1}{2}}\},
\]%
\[
\mathbb{\hat{E}}[|K_{T}|^{\alpha}]\leq C_{\alpha}\{ \mathbb{\hat{E}}%
[\sup_{t\in \lbrack0,T]}|Y_{t}|^{\alpha}]+\mathbb{\hat{E}}[(\int_{0}^{T}%
h_{s}^{0}ds)^{\alpha}]\},
\]
where $h_{s}^{0}=|f(s,0,0)|+\sum_{i,j=1}^{d}|g_{ij}(s,0,0)|$.
\end{proposition}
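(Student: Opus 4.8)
The plan is to establish the three estimates in turn; the main tools are the non-degeneracy $\underline{\sigma}^{2}>0$, the $G$-It\^{o} formula and the Burkholder--Davis--Gundy-type inequalities for $G$-stochastic integrals (see \cite{HJPS}), together with the convexity (Jensen's inequality) and the tower property $\mathbb{\hat{E}}_{t}[\mathbb{\hat{E}}_{s}[\cdot]]=\mathbb{\hat{E}}_{t}[\cdot]$ ($t\le s$) of the conditional $G$-expectation.

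\emph{Step 1: the bound on $Y$.} First I would derive the representation $Y_{t}=\mathbb{\hat{E}}_{t}[\xi+\int_{t}^{T}f(s,Y_{s},Z_{s})\,ds+\int_{t}^{T}g_{ij}(s,Y_{s},Z_{s})\,d\langle B^{i},B^{j}\rangle_{s}]$ by applying $\mathbb{\hat{E}}_{t}$ to equation (\ref{pr-eq1}): the symmetric $G$-martingale $\int_{t}^{T}Z_{s}\,dB_{s}$ disappears, while the decreasing $G$-martingale term gives ``$\le$'' by sub-additivity (since $K_{T}-K_{t}\le0$) and the reverse inequality by extracting the $\mathcal{F}_{t}$-measurable $Y_{t}$ from $\mathbb{\hat{E}}_{t}[Y_{t}+\int_{t}^{T}Z_{s}\,dB_{s}+(K_{T}-K_{t})]$. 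Invoking (H2), the definition of $h_{s}^{0}$ and the bounded variation of $d\langle B^{i},B^{j}\rangle_{s}$ relative to $ds$, this yields $|Y_{t}|\le \mathbb{\hat{E}}_{t}[|\xi|+C\int_{t}^{T}h_{s}^{0}\,ds+C\int_{t}^{T}(|Y_{s}|+|Z_{s}|)\,ds]$. To remove $\int_{t}^{T}|Z_{s}|\,ds$ I would linearise $f$ and $g_{ij}$ in $(y,z)$ (the increment coefficients being bounded by $L$) and absorb the resulting $z$-linear term into $\int_{t}^{T}Z_{s}\,dB_{s}$ by a $G$-Girsanov-type change of $\mathbb{\hat{E}}$ along this bounded drift, under which $\int Z\,dB$ stays a symmetric martingale and $K$ a decreasing martingale; the new generator then depends only on $Y$ and the data. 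A backward Gronwall iteration, followed by Jensen's inequality for $x\mapsto x^{\alpha}$, gives $|Y_{t}|^{\alpha}\le C_{\alpha}\mathbb{\hat{E}}_{t}[|\xi|^{\alpha}+(\int_{t}^{T}h_{s}^{0}\,ds)^{\alpha}]$ with $C_{\alpha}$ depending only on $\alpha,T,G,L$.

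\emph{Step 2: the bounds on $Z$ and $K$.} Reading (\ref{pr-eq1}) at $t=0$ gives $K_{T}=Y_{0}-\xi-\int_{0}^{T}f\,ds-\int_{0}^{T}g_{ij}\,d\langle B^{i},B^{j}\rangle_{s}+\int_{0}^{T}Z_{s}\,dB_{s}$, whence, by (H2) and the $G$-BDG inequality, $\mathbb{\hat{E}}[|K_{T}|^{\alpha}]\le C_{\alpha}\{\mathbb{\hat{E}}[\sup_{t}|Y_{t}|^{\alpha}]+\mathbb{\hat{E}}[(\int_{0}^{T}h_{s}^{0}\,ds)^{\alpha}]+\mathbb{\hat{E}}[(\int_{0}^{T}|Z_{s}|^{2}\,ds)^{\alpha/2}]\}$. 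For the $Z$-estimate I would apply the $G$-It\^{o} formula to $|Y_{t}|^{2}$ (and to $(|Y_{t}|^{2}+\varepsilon)^{\alpha/2}$, $\varepsilon\downarrow0$, when $\alpha\in(1,2)$): the quadratic-variation term $\int_{t}^{T}Z_{s}^{i}Z_{s}^{j}\,d\langle B^{i},B^{j}\rangle_{s}\ge\underline{\sigma}^{2}\int_{t}^{T}|Z_{s}|^{2}\,ds$ is kept on the left, the martingale $\int Y_{s}Z_{s}\,dB_{s}$ has vanishing $G$-expectation, and the terms $Y_{s}f$, $Y_{s}g_{ij}$ and $-2\int_{t}^{T}Y_{s}\,dK_{s}$ (bounded by $2\sup_{t}|Y_{t}|\,|K_{T}|$) are split by Young's inequality. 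Substituting the above expression for $K_{T}$, choosing the Young constants small enough to absorb both $\mathbb{\hat{E}}[\int_{0}^{T}|Z_{s}|^{2}\,ds]$ and $\mathbb{\hat{E}}[|K_{T}|^{2}]$ into the left-hand side, and finally using the Cauchy--Schwarz inequality for $\mathbb{\hat{E}}$ to put $\mathbb{\hat{E}}[\sup_{t}|Y_{t}|\int_{0}^{T}h_{s}^{0}\,ds]$ in product form, one obtains the stated $Z$-estimate; inserting it back into the bound for $\mathbb{\hat{E}}[|K_{T}|^{\alpha}]$ gives the $K$-estimate.

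\emph{Main obstacle.} The delicate parts are the $G$-Girsanov reduction in Step 1 and, more seriously, the coupling in Step 2: the term $-2\int_{t}^{T}Y_{s}\,dK_{s}$ forces $|K_{T}|^{2}$ onto the right-hand side, while $|K_{T}|^{2}$ itself carries $\int_{0}^{T}|Z_{s}|^{2}\,ds$, which simultaneously appears on the left from the It\^{o} step; closing this requires tracking the constants so that $\underline{\sigma}^{2}$ genuinely dominates the absorbed quantities. One must also keep every manipulation inside $\mathfrak{S}_{G}^{\alpha}(0,T)$ for $\alpha<\beta$, where Theorem \ref{the1.1} supplies the integrability needed for the $G$-It\^{o} and $G$-BDG steps.
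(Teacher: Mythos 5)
A preliminary remark: the paper does not prove Proposition \ref{pro3.4} at all --- it is quoted from \cite{HJPS} --- so your sketch has to be measured against the argument given there. Your Step 2 is essentially that argument: read $K_{T}$ off equation (\ref{pr-eq1}), apply It\^{o}'s formula to $|Y|^{2}$, use the non-degeneracy $\underline{\sigma}^{2}\int_{t}^{T}|Z_{s}|^{2}ds\leq\int_{t}^{T}Z_{s}^{i}Z_{s}^{j}d\langle B^{i},B^{j}\rangle_{s}$, the $G$-BDG inequality for $\int Y_{s}Z_{s}dB_{s}$, bound $-2\int Y_{s}dK_{s}$ by $2\sup_{t}|Y_{t}|\,|K_{T}|$, and close the $Z$--$K$ loop by Young's and Cauchy--Schwarz inequalities; that part is fine modulo bookkeeping, and it is independent of Step 1 since $\hat{\mathbb{E}}[\sup_{t}|Y_{t}|^{\alpha}]$ is allowed on the right-hand side of the second and third inequalities.

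The genuine gap is in Step 1. Your starting identity $Y_{t}=\hat{\mathbb{E}}_{t}[\xi+\int_{t}^{T}f\,ds+\int_{t}^{T}g_{ij}\,d\langle B^{i},B^{j}\rangle_{s}]$ is correct (pull $Y_{t}$ out by translation invariance and note that $\hat{\mathbb{E}}_{t}[\int_{t}^{T}Z_{s}dB_{s}+(K_{T}-K_{t})]=0$ by two-sided subadditivity, using the symmetry of the stochastic integral and the $G$-martingale property of $K$), but the proposed removal of the resulting $\int_{t}^{T}|Z_{u}|\,du$ by a ``$G$-Girsanov change of $\hat{\mathbb{E}}$ along a bounded drift'' is not an available tool. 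First, the $z$-dependence enters through $g_{ij}\,d\langle B^{i},B^{j}\rangle_{s}$ as well as through $f\,ds$, so the linearised term is an integrand against $d\langle B\rangle$, not a plain drift. Second, in the sublinear framework there is no off-the-shelf change of expectation that performs this absorption: the Girsanov-type transformation of \cite{HJPS1} is itself constructed from $G$-BSDEs and rests on precisely the estimates of Proposition \ref{pro3.4}, so invoking it here is circular; moreover it yields a new nonlinear expectation under which you would still have to prove that $\int Z\,dB$ stays symmetric and $K$ stays a decreasing martingale (you assert this without argument), and you would then have to transport the bound back to the original $\hat{\mathbb{E}}_{t}$, since the conclusion is a conditional estimate under $\hat{\mathbb{E}}_{t}$ for every $t$. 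The proof in \cite{HJPS} avoids the difficulty altogether: It\^{o}'s formula is applied directly to $(|Y_{s}|^{2}+\varepsilon)^{\alpha/2}e^{rs}$, the linear-in-$Z$ contributions are absorbed by Young's inequality into the second-order term coming from $d\langle B^{i},B^{j}\rangle$ (this is exactly where $\underline{\sigma}^{2}>0$ is used), $r$ is chosen large to swallow the $|Y|$-terms, and the remaining term $\alpha\int e^{rs}(|Y_{s}|^{2}+\varepsilon)^{(\alpha-2)/2}Y_{s}\,dK_{s}$ is controlled through the decreasing-$G$-martingale property of $K$, a delicate step that requires its own lemma; no term $\int_{t}^{T}|Z_{u}|\,du$ ever appears. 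As written, your Step 1 does not deliver the first inequality, so the proposal is incomplete.
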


\begin{proposition}
\label{norm} (\cite{Song11,HJPS}) Let $\alpha \geq1$ and $\delta>0$ be fixed.
Then there exists a constant $C$ depending on $\alpha$ and $\delta$ such that%
\[
\mathbb{\hat{E}}[\sup_{t\in \lbrack0,T]}\mathbb{\hat{E}}_{t}[|\xi|^{\alpha
}]]\leq C\{(\mathbb{\hat{E}}[|\xi|^{\alpha+\delta}])^{\alpha/(\alpha+\delta
)}+\mathbb{\hat{E}}[|\xi|^{\alpha+\delta}]\},\  \forall \xi \in L_{G}%
^{\alpha+\delta}(\Omega_{T}).
\]

\end{proposition}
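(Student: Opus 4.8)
The plan is to reduce the estimate to a Doob-type maximal inequality for the nonnegative $G$-martingale $t\mapsto\mathbb{\hat{E}}_t[\eta]$ and then to prove that inequality via the capacity representation of the $G$-expectation. Set $\beta:=\alpha+\delta$, $r:=\alpha/\beta\in(0,1)$ and $\eta:=|\xi|^{\beta}\in L_G^{1}(\Omega_T)$, so that $|\xi|^{\alpha}=\eta^{r}$. Since $x\mapsto x^{r}$ is concave and nondecreasing on $[0,\infty)$, conditional Jensen for $\mathbb{\hat{E}}_t$ --- obtained by writing $\mathbb{\hat{E}}_t$ as an essential supremum of classical conditional expectations over the family of measures representing $\mathbb{\hat{E}}$ --- gives $\mathbb{\hat{E}}_t[\eta^{r}]\le(\mathbb{\hat{E}}_t[\eta])^{r}$ $G$-q.s., and hence, $x\mapsto x^{r}$ being nondecreasing, $\sup_{t\in[0,T]}\mathbb{\hat{E}}_t[|\xi|^{\alpha}]\le\bigl(\sup_{t\in[0,T]}\mathbb{\hat{E}}_t[\eta]\bigr)^{r}$. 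It therefore suffices to prove the maximal inequality
\[
\mathbb{\hat{E}}\Bigl[\bigl(\sup_{t\in[0,T]}\mathbb{\hat{E}}_t[\eta]\bigr)^{r}\Bigr]\le\tfrac{1}{1-r}\bigl(\mathbb{\hat{E}}[\eta]\bigr)^{r},\qquad\eta\in L_G^{1}(\Omega_T),\ \eta\ge0,
\]
which yields the proposition with $C=\beta/\delta$; in fact only the first of the two terms on the right-hand side of the stated bound is needed.

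I would prove this maximal inequality first for $\eta\in L_{ip}(\Omega_T)$, $\eta\ge0$ (bounded), where all technical issues evaporate. Put $N_t:=\mathbb{\hat{E}}_t[\eta]$; by Definition \ref{def2.5}(ii) it is an explicit continuous function of finitely many increments of $B$, so $N$ has $G$-q.s. continuous paths and $N^{*}:=\sup_{t\in[0,T]}N_t$ is a bounded, quasi-continuous random variable. Let $\mathcal P$ be the family of probability measures with $\mathbb{\hat{E}}[\,\cdot\,]=\sup_{P\in\mathcal P}E_P[\,\cdot\,]$ and $c(A):=\sup_{P\in\mathcal P}P(A)$ the associated capacity. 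For $s\le t$, the tower property of $\mathbb{\hat{E}}_t$ together with the standard domination $\mathbb{\hat{E}}_s[\,\cdot\,]\ge E_P[\,\cdot\,|\mathcal F_s]$ ($P$-a.s., for each $P\in\mathcal P$) gives $N_s=\mathbb{\hat{E}}_s[\mathbb{\hat{E}}_t[\eta]]\ge E_P[N_t\,|\,\mathcal F_s]$ $P$-a.s., so $N$ is a nonnegative $P$-supermartingale for every $P\in\mathcal P$ simultaneously, with constant initial value $N_0=\mathbb{\hat{E}}[\eta]$. Doob's weak maximal inequality yields $P(N^{*}\ge\lambda)\le E_P[N_0]/\lambda=\mathbb{\hat{E}}[\eta]/\lambda$ for every $P$, hence $c(N^{*}\ge\lambda)\le\mathbb{\hat{E}}[\eta]/\lambda$. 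Finally, using $\mathbb{\hat{E}}[X]\le\int_0^{\infty}c(X>\lambda)\,d\lambda$ for $X\ge0$ (since $E_P[X]=\int_0^{\infty}P(X>\lambda)\,d\lambda\le\int_0^{\infty}c(X>\lambda)\,d\lambda$ for each $P$), the substitution $\mu=\lambda^{1/r}$ gives, for every $a>0$,
\[
\mathbb{\hat{E}}\bigl[(N^{*})^{r}\bigr]\le\int_0^{\infty}c(N^{*}>\mu)\,r\mu^{r-1}\,d\mu\le a^{r}+\tfrac{r}{1-r}\,\mathbb{\hat{E}}[\eta]\,a^{r-1},
\]
and choosing $a=\mathbb{\hat{E}}[\eta]$ (the case $\mathbb{\hat{E}}[\eta]=0$ being trivial) produces $\tfrac{1}{1-r}(\mathbb{\hat{E}}[\eta])^{r}$, as wanted.

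Combining the two steps proves the proposition for every $\xi\in L_{ip}(\Omega_T)$. For a general $\xi\in L_G^{\beta}(\Omega_T)$ I would pass to the limit along bounded $\xi_n\in L_{ip}(\Omega_T)$ with $\xi_n\to\xi$ in $L_G^{\beta}$. Since $\bigl||\xi_n|^{\alpha}-|\xi_m|^{\alpha}\bigr|$ is again a (bounded) cylinder functional, the cylinder estimate applies to it, and combined with $\bigl||a|^{\alpha}-|b|^{\alpha}\bigr|\le\alpha(|a|\vee|b|)^{\alpha-1}|a-b|$ and Hölder's inequality for $\mathbb{\hat{E}}$ it shows that $\{(\sup_t\mathbb{\hat{E}}_t[|\xi_n|^{\alpha}])^{r}\}_n$ is a Cauchy sequence in $L_G^{1}$ (here one also uses $|\sup_t a_t-\sup_t b_t|\le\sup_t|a_t-b_t|$, $|\mathbb{\hat{E}}_t[X]-\mathbb{\hat{E}}_t[Y]|\le\mathbb{\hat{E}}_t[|X-Y|]$ and $|x^{r}-y^{r}|\le|x-y|^{r}$). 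Identifying its limit with $(\sup_t\mathbb{\hat{E}}_t[|\xi|^{\alpha}])^{r}$ along a q.s.-convergent subsequence and invoking Fatou's lemma for $\mathbb{\hat{E}}$ transfers the cylinder bound to $\xi$, which finishes the proof.

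The only genuinely delicate point, and the reason for routing the argument through cylinder functionals, will be the path regularity needed to make $\sup_{t\in[0,T]}\mathbb{\hat{E}}_t[\eta]$ a bona fide capacity-measurable random variable and to apply Doob's inequality under every $P\in\mathcal P$ at once; on $L_{ip}(\Omega_T)$ this is immediate from the explicit form in Definition \ref{def2.5}(ii). Everything else rests on standard facts about $G$-expectations recalled in the references of Section 2: the representation $\mathbb{\hat{E}}=\sup_{P\in\mathcal P}E_P$, the domination $\mathbb{\hat{E}}_s[\,\cdot\,]\ge E_P[\,\cdot\,|\mathcal F_s]$, conditional Jensen, and Fatou's lemma for $\mathbb{\hat{E}}$.
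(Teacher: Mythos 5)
Proposition \ref{norm} is not proved in this paper at all: it is quoted from \cite{Song11,HJPS}, so your attempt has to be measured against the argument in those references rather than against anything in the text. Your route --- reduce via the conditional Jensen inequality for the concave power $x^{r}$, $r=\alpha/(\alpha+\delta)$, observe that $N_{t}=\mathbb{\hat{E}}_{t}[\eta]$ is a nonnegative supermartingale under every $P$ in the representing family, apply Doob's weak maximal inequality under each $P$ to get the capacity bound $c(N^{*}\geq\lambda)\leq\mathbb{\hat{E}}[\eta]/\lambda$, and integrate by the layer-cake formula --- is in substance the same mechanism used by Song; the only genuine difference is that you optimize the splitting level ($a=\mathbb{\hat{E}}[\eta]$), which yields the homogeneous one-term bound $\frac{\alpha+\delta}{\delta}(\mathbb{\hat{E}}[|\xi|^{\alpha+\delta}])^{\alpha/(\alpha+\delta)}$ with an explicit constant, slightly sharper than (and implying) the stated two-term inequality. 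This is a correct and even cleaner packaging of the known proof.

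Two points deserve more care than your sketch gives them. First, the whole argument hinges on the domination $\mathbb{\hat{E}}_{t}[X]\geq E_{P}[X\,|\,\mathcal{F}_{t}]$ $P$-a.s. for every $P$ in the family with $\mathbb{\hat{E}}=\sup_{P}E_{P}$ (equivalently, the essential-supremum representation of the conditional $G$-expectation). This is a nontrivial theorem of the literature (Denis--Hu--Peng, Soner--Touzi--Zhang, and it is exactly what \cite{Song11} relies on); it is nowhere in Section 2 of this paper, so you must either cite it precisely or, since you only use it for $\eta\in L_{ip}(\Omega_{T})$, prove it there directly (e.g.\ by It\^{o}'s formula applied to the solution of the $G$-heat equation between partition points, which simultaneously gives the path continuity of $N$ that Doob's inequality needs). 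Second, in the approximation step the object should be $\sup_{t}\mathbb{\hat{E}}_{t}[|\xi_{n}|^{\alpha}]$ itself (the extra exponent $r$ there is a slip), and you should make explicit that for general $\xi\in L_{G}^{\alpha+\delta}(\Omega_{T})$ the random variable $\sup_{t\in[0,T]}\mathbb{\hat{E}}_{t}[|\xi|^{\alpha}]$ is \emph{defined} through this limit (the quasi-continuous version), identified by the $L_{G}^{1}$-convergence $\mathbb{\hat{E}}_{t}[|\xi_{n}|^{\alpha}]\rightarrow\mathbb{\hat{E}}_{t}[|\xi|^{\alpha}]$ at each fixed $t$; once $S_{n}=\sup_{t}\mathbb{\hat{E}}_{t}[|\xi_{n}|^{\alpha}]$ is Cauchy in $L_{G}^{1}$ you do not even need Fatou, since $\mathbb{\hat{E}}[S_{n}]\rightarrow\mathbb{\hat{E}}[S]$. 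With these repairs the proposal is a complete proof.
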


\begin{theorem}
\label{com} (\cite{HJPS1}) Let $(Y^{l},Z^{l},K^{l})$, $l=1,2$, be the
solutions of the following $G$-BSDEs:%
\begin{align*}
Y_{t}^{l}  &  =\xi+\int_{t}^{T}f(s,Y_{s}^{l},Z_{s}^{l})ds+\int_{t}^{T}%
g_{ij}(s,Y_{s}^{l},Z_{s}^{l})d\langle B^{i},B^{j}\rangle_{s}\\
&  +V_{T}^{l}-V_{t}^{l}-\int_{t}^{T}Z_{s}^{l}dB_{s}-(K_{T}^{l}-K_{t}^{l}),
\end{align*}
where $\xi \in L_{G}^{\beta}(\Omega_{T})$, $f$ and $g_{ij}$ satisfy (H1) and
(H2) for some $\beta>1$, $(V_{t}^{l})_{t\leq T}$ are RCLL processes in
$M_{G}^{\beta}(0,T)$ such that $\mathbb{\hat{E}}[\sup_{t\in \lbrack0,T]}%
|V_{t}^{l}|^{\beta}]<\infty$. If $V_{t}^{1}-V_{t}^{2}$ is an increasing
process, then $Y_{t}^{1}\geq Y_{t}^{2}$ for $t\in \lbrack0,T]$.
\end{theorem}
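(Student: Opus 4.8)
The plan is to transform, in two steps, the two $G$-BSDEs into two $G$-BSDEs with \emph{zero} generator --- whose solutions are then ordinary conditional $G$-expectations --- and only at the end to compare the resulting terminal values; crucially, the difference $K^1-K^2$ of the decreasing $G$-martingale parts will never be formed. First I would linearize. With $\delta Y=Y^1-Y^2$, $\delta Z=Z^1-Z^2$, (H2) yields adapted processes $a,b,\gamma_{ij},\eta_{ij}$, all bounded by $L$, such that $f(s,Y^1_s,Z^1_s)-f(s,Y^2_s,Z^2_s)=a_s\delta Y_s+\langle b_s,\delta Z_s\rangle$ and $g_{ij}(s,Y^1_s,Z^1_s)-g_{ij}(s,Y^2_s,Z^2_s)=\gamma_{ij,s}\delta Y_s+\langle\eta_{ij,s},\delta Z_s\rangle$. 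Putting $\bar f(s,y,z):=f(s,Y^2_s,Z^2_s)+a_s(y-Y^2_s)+\langle b_s,z-Z^2_s\rangle$ and likewise $\bar g_{ij}$, one has $\bar f(s,Y^l_s,Z^l_s)=f(s,Y^l_s,Z^l_s)$ and $\bar g_{ij}(s,Y^l_s,Z^l_s)=g_{ij}(s,Y^l_s,Z^l_s)$ for $l=1,2$; hence, by uniqueness of the solution (cf. Theorem~\ref{the1.1}), $(Y^l,Z^l,K^l)$ is precisely the solution of the $G$-BSDE with data $(\xi,\bar f,\bar g_{ij},V^l)$. So I may assume the generators are affine, $\bar f(s,y,z)=\bar f(s,0,0)+a_sy+\langle b_s,z\rangle$ and $\bar g_{ij}(s,y,z)=\bar g_{ij}(s,0,0)+\gamma_{ij,s}y+\langle\eta_{ij,s},z\rangle$, with $\bar f(\cdot,0,0),\bar g_{ij}(\cdot,0,0)\in M_G^{\beta'}(0,T)$ for some $\beta'>1$, the two equations differing only through $V^1$ versus $V^2$.

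Next I would strip off the $z$-coefficients by a Girsanov transformation for $G$-Brownian motion (see \cite{HJPS1}): on an equivalent sublinear expectation $\mathbb{\tilde{E}}$ there is a $G$-Brownian motion $\tilde B$ with $\langle\tilde B^i,\tilde B^j\rangle=\langle B^i,B^j\rangle$ and $-\int_t^TZ^l_s\,d\tilde B_s=-\int_t^TZ^l_s\,dB_s+\int_t^T\langle b_s,Z^l_s\rangle\,ds+\int_t^T\langle\eta_{ij,s},Z^l_s\rangle\,d\langle B^i,B^j\rangle_s$, and under which decreasing $G$-martingales remain decreasing $G$-martingales; the equations become $Y^l_t=\xi+\int_t^T(\bar f(s,0,0)+a_sY^l_s)\,ds+\int_t^T(\bar g_{ij}(s,0,0)+\gamma_{ij,s}Y^l_s)\,d\langle B^i,B^j\rangle_s+V^l_T-V^l_t-\int_t^TZ^l_s\,d\tilde B_s-(K^l_T-K^l_t)$. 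Then I would strip off the $y$-coefficients by multiplying with the strictly positive, bounded process $\Gamma_t:=\exp\!\big(\int_0^ta_s\,ds+\int_0^t\gamma_{ij,s}\,d\langle B^i,B^j\rangle_s\big)$ and applying It\^o's formula to $\Gamma_tY^l_t$; the $a_sY^l_s$- and $\gamma_{ij,s}Y^l_s$-terms cancel, leaving
\[
\Gamma_tY^l_t=\Gamma_T\xi+\int_t^T\Gamma_s\bar f(s,0,0)\,ds+\int_t^T\Gamma_s\bar g_{ij}(s,0,0)\,d\langle B^i,B^j\rangle_s+\int_t^T\Gamma_s\,dV^l_s-\int_t^T\Gamma_sZ^l_s\,d\tilde B_s-\int_t^T\Gamma_s\,dK^l_s .
\]

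The key point is the last integral. I would prove that, because $\Gamma>0$ and $K^l$ is a decreasing $G$-martingale with $K^l_0=0$, the process $\tilde K^l_t:=\int_0^t\Gamma_s\,dK^l_s$ is again a decreasing $G$-martingale: for piecewise constant $\Gamma$ this follows by backward induction over the partition points, using the tower property of $\mathbb{\hat{E}}_\cdot[\cdot]$ and its positive homogeneity with respect to the nonnegative, time-$t_i$ factor $\Gamma_{t_i}$ (so that $\mathbb{\hat{E}}_{t_i}[\Gamma_{t_i}(K^l_{t_{i+1}}-K^l_{t_i})]=\Gamma_{t_i}\mathbb{\hat{E}}_{t_i}[K^l_{t_{i+1}}-K^l_{t_i}]=0$), and one then passes to the limit in $L^1_G$. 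Granting this, the triple $(\tilde Y^l,\tilde Z^l,\tilde K^l)$ with $\tilde Y^l_t:=\Gamma_tY^l_t+\int_0^t\Gamma_s\bar f(s,0,0)\,ds+\int_0^t\Gamma_s\bar g_{ij}(s,0,0)\,d\langle B^i,B^j\rangle_s+\int_0^t\Gamma_s\,dV^l_s$ and $\tilde Z^l:=\Gamma Z^l$ solves the $G$-BSDE with zero generator and terminal value $\zeta^l:=\Gamma_T\xi+\int_0^T\Gamma_s\bar f(s,0,0)\,ds+\int_0^T\Gamma_s\bar g_{ij}(s,0,0)\,d\langle B^i,B^j\rangle_s+\int_0^T\Gamma_s\,dV^l_s$ (the requisite integrability being routine); hence, by the $G$-martingale representation theorem (\cite{HJPS}) and uniqueness, $\tilde Y^l_t=\mathbb{\tilde{E}}_t[\zeta^l]$. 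Since $\bar f(s,0,0)$ and $\bar g_{ij}(s,0,0)$ do not depend on $l$, subtracting gives $\Gamma_t(Y^1_t-Y^2_t)=\mathbb{\tilde{E}}_t[\zeta^1]-\mathbb{\tilde{E}}_t[\zeta^2]-\int_0^t\Gamma_s\,d(V^1-V^2)_s$. Because $V^1-V^2$ is increasing and $\Gamma>0$, $\zeta^1=\zeta^2+\int_0^T\Gamma_s\,d(V^1-V^2)_s$ and $\int_0^t\Gamma_s\,d(V^1-V^2)_s\ge0$ is known at time $t$, so by constant preservation and monotonicity of $\mathbb{\tilde{E}}_t[\cdot]$,
\[
\mathbb{\tilde{E}}_t[\zeta^1]=\mathbb{\tilde{E}}_t\!\Big[\zeta^2+\int_t^T\Gamma_s\,d(V^1-V^2)_s\Big]+\int_0^t\Gamma_s\,d(V^1-V^2)_s\ \ge\ \mathbb{\tilde{E}}_t[\zeta^2]+\int_0^t\Gamma_s\,d(V^1-V^2)_s ;
\]
therefore $\Gamma_t(Y^1_t-Y^2_t)\ge0$ and, since $\Gamma_t>0$, $Y^1_t\ge Y^2_t$ for all $t\in[0,T]$.

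The only genuine obstacle is, I expect, the decreasing $G$-martingale parts. The naive approach --- subtracting the two equations at the outset --- produces $K^1-K^2$, a difference of decreasing $G$-martingales which is neither monotone nor a $G$-martingale, and whose $K^2$-contribution enters with the wrong pointwise sign; neither an exponential weight nor a Girsanov change of the expectation repairs this. The device is to never form $K^1-K^2$: carry each equation separately, absorb the strictly positive weight $\Gamma$ into $K^l$ \emph{while} preserving the decreasing-$G$-martingale property, reduce each equation to a zero-generator $G$-BSDE whose solution is a bona fide conditional $G$-expectation, and subtract the two expectations only at the very end, where the sublinearity of $\mathbb{\tilde{E}}_t$ points in the favourable direction. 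Proving that $\int_0^\cdot\Gamma_s\,dK^l_s$ is a decreasing $G$-martingale --- and, in the preparatory step, that the $G$-Girsanov transformation preserves this class --- is thus where the substance of the argument sits.
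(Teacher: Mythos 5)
The paper itself gives no proof of Theorem \ref{com}: it is imported verbatim from \cite{HJPS1}, so your attempt has to be judged on its own terms and against the argument in that source. Your plan is the classical linearization--Girsanov--exponential-weight proof transplanted to the $G$-setting, and its weak point is exactly the step that blocks this transplant. Under (H2) alone, the linearization coefficients $a_s,b_s,\gamma_{ij,s},\eta_{ij,s}$ are defined $\omega$-wise as difference quotients cut off by indicator functions of sets such as $\{\delta Y_s\neq0\}$; they are bounded and adapted, but in the $G$-framework boundedness is not enough. Every object you build afterwards requires them to be admissible integrands, i.e.\ elements of $M_{G}^{\beta}(0,T)$ (with the quasi-continuity that membership encodes), and there is no reason they are: indicators and quotients of elements of $M_G$-spaces generically leave these spaces. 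Without that membership the affine equation need not satisfy (H1) (so Theorem \ref{the1.1} is unavailable), the Girsanov transformation of \cite{HJPS1} cannot be invoked (it is established for coefficients lying in such spaces, since the transformed expectation is itself defined through a well-posed $G$-BSDE), $\Gamma$ need not be quasi-continuous, so the identity $\mathbb{\hat{E}}_{t_i}[\Gamma_{t_i}(K_{t_{i+1}}-K_{t_i})]=\Gamma_{t_i}\mathbb{\hat{E}}_{t_i}[K_{t_{i+1}}-K_{t_i}]$ (which needs $\Gamma_{t_i}\in L_G^1(\Omega_{t_i})$, bounded and nonnegative) is unjustified, and $\zeta^l$ need not lie in the domain of $\mathbb{\tilde{E}}_t$. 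This is not a removable technicality; it is precisely why the classical linearization proof of comparison does not carry over under (H1)--(H2), and why the proof in \cite{HJPS1} takes a different, approximation-based route resting on a priori estimates of the type in Proposition \ref{pro3.4} rather than on linearization. Your argument would become a proof under strictly stronger hypotheses (e.g.\ $f,g_{ij}$ continuously differentiable in $(y,z)$ with uniformly continuous derivatives, so that $a_s=\int_0^1\partial_yf(s,Y^2_s+\theta\delta Y_s,Z^1_s)\,d\theta$ etc.\ are again in $M_G^{\beta}$), but not under the stated ones.

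Two further points would need repair even granting the linearization. First, $V^1,V^2$ are only RCLL with $V^1-V^2$ increasing; individually they need not have finite variation, so $\int_0^t\Gamma_s\,dV^l_s$ is not a Stieltjes integral and must be defined by parts as $\Gamma_tV^l_t-\int_0^tV^l_s\,d\Gamma_s$, with the product rule rerun in that form. Second, the properties of the transformed expectation that you use -- that $\tilde B$ is a $G$-Brownian motion with the same $G$, that the relevant $H_G/M_G/L_G$ spaces are preserved, that $\mathbb{\tilde{E}}_t$ is translation invariant with respect to time-$t$ variables, and that decreasing $G$-martingales remain decreasing $\mathbb{\tilde{E}}$-martingales -- are not all available off the shelf in \cite{HJPS1} and would have to be proved (the last does follow, once $b,\eta$ are admissible, from uniqueness: $(K,0,K)$ solves the defining $G$-BSDE because the linear generator vanishes at $z=0$ and is independent of $y$). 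On the credit side, your key lemma that $\int_0^{\cdot}\Gamma_s\,dK_s$ is a decreasing $G$-martingale is correctly argued (backward induction via the tower property, translation invariance and positive homogeneity for step weights, then uniform approximation using $|\int(\Gamma^n_s-\Gamma_s)\,dK_s|\leq\sup_s|\Gamma^n_s-\Gamma_s|\,|K_T|$), and the concluding step -- exploiting sublinearity only at the very end, never forming $K^1-K^2$ -- is sound; the gap is upstream, at the linearization.
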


In this paper, we also need the following assumptions for $G$-BSDE
(\ref{pr-eq1}).

\begin{description}
\item[(H3)] For each fixed $(\omega,y,z)\in \Omega_{T}\times \mathbb{R}%
\times \mathbb{R}^{d}$, $t\rightarrow f(t,\omega,y,z)$ and $t\rightarrow
g_{ij}(t,\omega,y,z)$ are continuous.

\item[(H4)] For each fixed $(t,y,z)\in \lbrack0,T)\times \mathbb{R}%
\times \mathbb{R}^{d}$, $f(t,y,z)$, $g_{ij}(t,y,z)\in L_{G}^{\beta}(\Omega
_{t})$ and
\[
\lim_{\varepsilon \rightarrow0+}\frac{1}{\varepsilon}\mathbb{\hat{E}}[\int
_{t}^{t+\varepsilon}(|f(u,y,z)-f(t,y,z)|^{\beta}+\sum_{i,j=1}^{d}%
|g_{ij}(u,y,z)-g_{ij}(t,y,z)|^{\beta})du]=0.
\]

\item[(H5)] For each $(t,\omega,y)\in \lbrack0,T]\times \Omega_{T}%
\times \mathbb{R}$, $f(t,\omega,y,0)=g_{ij}(t,\omega,y,0)=0$.
\end{description}

Assume that $\xi \in L_{G}^{\beta}(\Omega_{T})$, $f$ and $g_{ij}$ satisfy (H1),
(H2) and (H5) for some $\beta>1$. Let $(Y^{T,\xi},Z^{T,\xi},K^{T,\xi})$ be the
solution of $G$-BSDE (\ref{pr-eq1}) corresponding to $\xi$, $f$ and $g_{ij}$
on $[0,T]$. It is easy to check that $Y^{T,\xi}=Y^{T^{\prime},\xi}$ on $[0,T]$
for $T^{\prime}>T$. Following (\cite{HJPS1}), we can define consistent
nonlinear expectation%
\[
\mathbb{\tilde{E}}_{t}[\xi]=Y_{t}^{T,\xi}\text{ for }t\in \lbrack0,T],
\]
and set $\mathbb{\tilde{E}}[\xi]=\mathbb{\tilde{E}}_{0}[\xi]=Y_{0}^{T,\xi}$.

\section{Representation theorem of generators for $G$-BSDEs}

We consider the following type of $G$-FBSDEs:%
\begin{equation}
X_{s}^{t,x}=x+\int_{t}^{s}b(X_{u}^{t,x})du+\int_{t}^{s}h_{ij}(X_{u}%
^{t,x})d\langle B^{i},B^{j}\rangle_{u}+\int_{t}^{s}\sigma(X_{u}^{t,x})dB_{u},
\label{re-eq1}%
\end{equation}%
\begin{align}
\mbox{}^{\varepsilon}Y_{s}^{t,x,y,p}  &  =y+\langle p,X_{t+\varepsilon}%
^{t,x}-x\rangle+\int_{s}^{t+\varepsilon}f(u,\mbox{}^{\varepsilon}%
Y_{u}^{t,x,y,p},\mbox{}^{\varepsilon}Z_{u}^{t,x,y,p})du\nonumber \\
&  +\int_{s}^{t+\varepsilon}g_{ij}(u,\mbox{}^{\varepsilon}Y_{u}^{t,x,y,p}%
,\mbox{}^{\varepsilon}Z_{u}^{t,x,y,p})d\langle B^{i},B^{j}\rangle
_{u}\nonumber \\
&  -\int_{s}^{t+\varepsilon}\mbox{}^{\varepsilon}Z_{u}^{t,x,y,p}%
dB_{u}-(\mbox{}^{\varepsilon}K_{t+\varepsilon}^{t,x,y,p}-\mbox{}^{\varepsilon
}K_{s}^{t,x,y,p}), \label{re-eq2}%
\end{align}
where $h_{ij}=h_{ji}$ and $g_{ij}=g_{ji}$, $1\leq i,j\leq d$.

We now give the main result in this section.

\begin{theorem}
\label{re-the1}Let $b:\mathbb{R}^{n}\rightarrow \mathbb{R}^{n}$, $h_{ij}%
:\mathbb{R}^{n}\rightarrow \mathbb{R}^{n}$ and $\sigma:\mathbb{R}%
^{n}\rightarrow \mathbb{R}^{n\times d}$ be Lipschitz functions and let $f$ and
$g_{ij}$ satisfy (H1), (H2), (H3) and (H4) for some $\beta>1$. Then, for each
$(t,x,y,p)\in \lbrack0,T)\times \mathbb{R}^{n}\times \mathbb{R}\times
\mathbb{R}^{n}$ and $\alpha \in(1,\beta)$, we have%
\begin{align}
&  L_{G}^{\alpha}-\lim_{\varepsilon \rightarrow0+}\frac{1}{\varepsilon}\{
\mbox{}^{\varepsilon}Y_{t}^{t,x,y,p}-y\} \nonumber \\
&  =f(t,y,\sigma^{T}(x)p)+\langle p,b(x)\rangle+2G((g_{ij}(t,y,\sigma
^{T}(x)p)+\langle p,h_{ij}(x)\rangle)_{i,j=1}^{d}) \label{re-eq3}%
\end{align}

\end{theorem}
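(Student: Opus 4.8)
The plan is to reduce the problem to a comparison between the solution $\mbox{}^{\varepsilon}Y^{t,x,y,p}$ of the $G$-BSDE \eqref{re-eq2} and a suitable "frozen" reference BSDE whose driver has constant coefficients, then to compute the limit for the reference BSDE explicitly and control the error. First I would introduce the forward diffusion estimate: by the Lipschitz assumptions on $b$, $h_{ij}$, $\sigma$ and the standard estimates for $G$-SDEs one has $\mathbb{\hat E}[\sup_{t\le s\le t+\varepsilon}|X_s^{t,x}-x|^{2m}]\le C\varepsilon^{m}$ for each $m\ge 1$, and in particular the terminal condition $\xi_\varepsilon:=y+\langle p, X_{t+\varepsilon}^{t,x}-x\rangle$ of \eqref{re-eq2} satisfies $\mathbb{\hat E}[|\xi_\varepsilon-y|^{2}]=O(\varepsilon)$. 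Next I would write the obvious candidate: let $(\bar Y,\bar Z,\bar K)$ solve on $[t,t+\varepsilon]$ the $G$-BSDE with terminal value $\xi_\varepsilon$ and driver obtained by freezing the state at $(x,y,\sigma^{T}(x)p)$, i.e. with generator $f(s,y,\sigma^{T}(x)p)$ and $g_{ij}(s,y,\sigma^{T}(x)p)$; since these are processes in $M_G^{\beta}$ depending only on $s$, a direct verification (test against the explicit solution, using the representation $2G(\Lambda)=\sup_{\gamma\in\Gamma}\mathrm{tr}[\gamma\Lambda]$ and that $d\langle B^i,B^j\rangle$ is controlled from above by $2G$) shows
\[
\frac{1}{\varepsilon}\bigl(\bar Y_t - y\bigr)\;\xrightarrow[\varepsilon\to0+]{L_G^{\alpha}}\; f(t,y,\sigma^{T}(x)p)+\langle p,b(x)\rangle+2G\bigl((g_{ij}(t,y,\sigma^{T}(x)p)+\langle p,h_{ij}(x)\rangle)_{i,j=1}^d\bigr),
\]
where the $\langle p,b(x)\rangle$ term comes from $\frac1\varepsilon\mathbb{\hat E}_t[\langle p, X_{t+\varepsilon}^{t,x}-x\rangle]$ having drift part $\langle p,b(x)\rangle\varepsilon+o(\varepsilon)$ and the $h_{ij}$ contribution being absorbed into the $2G(\cdot)$ term via $\int_t^{t+\varepsilon}\langle p,h_{ij}(x)\rangle d\langle B^i,B^j\rangle_s$. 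Here I use (H3) and (H4) precisely to guarantee that replacing the driver at time $s$ by its value at time $t$ produces an $L_G^\alpha$-error that is $o(\varepsilon)$ after division by $\varepsilon$.

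The remaining and genuinely technical step is the a priori estimate showing $\frac1\varepsilon\,\mathbb{\hat E}[|\mbox{}^{\varepsilon}Y_t^{t,x,y,p}-\bar Y_t|^{\alpha}]\to 0$. I would obtain this from Proposition \ref{pro3.4} applied to the $G$-BSDE solved by the difference $(\mbox{}^{\varepsilon}Y-\bar Y,\mbox{}^{\varepsilon}Z-\bar Z,\mbox{}^{\varepsilon}K-\bar K)$: the terminal value is $0$, and the effective generator is a sum of a Lipschitz part in $(\delta Y,\delta Z)$ (handled by (H2), contributing the usual Gronwall factor $e^{C\varepsilon}$ that is harmless as $\varepsilon\to0$) plus an inhomogeneous term $h_s^0 = |f(s,\mbox{}^{\varepsilon}Y_s,\mbox{}^{\varepsilon}Z_s)-f(s,y,\sigma^{T}(x)p)|+\sum_{i,j}|g_{ij}(s,\mbox{}^{\varepsilon}Y_s,\mbox{}^{\varepsilon}Z_s)-g_{ij}(s,y,\sigma^{T}(x)p)|$. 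Using (H2) this inhomogeneous term is bounded by $L(|\mbox{}^{\varepsilon}Y_s-y|+|\mbox{}^{\varepsilon}Z_s-\sigma^{T}(x)p|)+$ (an $(\omega,s)$-oscillation term controlled by (H4)); the first piece is reabsorbed, and for the $Z$-part one needs the a priori bound on $\mathbb{\hat E}[(\int_t^{t+\varepsilon}|\mbox{}^{\varepsilon}Z_s-\sigma^{T}(x)p|^{2}ds)^{\alpha/2}]$, again from Proposition \ref{pro3.4}, together with the forward estimate. Carrying the bookkeeping through yields $\mathbb{\hat E}[|\mbox{}^{\varepsilon}Y_t^{t,x,y,p}-\bar Y_t|^{\alpha}]=o(\varepsilon^{\alpha})$, which after division by $\varepsilon$ gives $0$ in the limit.

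Combining the two pieces — the explicit limit for $\bar Y$ and the vanishing of the normalized difference — and using that $L_G^{\alpha}$-limits are linear, we obtain \eqref{re-eq3}. The main obstacle I anticipate is the $Z$-estimate: one must show that $\frac1\varepsilon\mathbb{\hat E}[(\int_t^{t+\varepsilon}|\mbox{}^{\varepsilon}Z_s|^{2}\,ds)^{\alpha/2}]$ does not blow up and in fact that $\mbox{}^{\varepsilon}Z_s$ is close to the constant $\sigma^{T}(x)p$ in the relevant averaged sense. This is where the non-degeneracy assumption $\underline\sigma^2>0$ and the precise form of the estimates in Proposition \ref{pro3.4} are essential, and where one must be careful that all constants $C_\alpha$ are uniform in $\varepsilon$ (they are, since they depend only on $\alpha,T,G,L$). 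Everything else is a matter of assembling the standard $G$-BSDE estimates together with (H3)–(H4) to kill the time-oscillation of the frozen driver.
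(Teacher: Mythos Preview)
Your overall strategy---freeze the coefficients, compute a reference solution, control the normalized difference---is the natural one and is morally what the paper does. But there is a structural gap in your error estimate that is specific to the $G$-setting and does not arise for classical BSDEs.

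You propose to apply Proposition~\ref{pro3.4} to ``the $G$-BSDE solved by the difference $(\mbox{}^{\varepsilon}Y-\bar Y,\mbox{}^{\varepsilon}Z-\bar Z,\mbox{}^{\varepsilon}K-\bar K)$''. This triple is \emph{not} a solution in the sense of Definition~\ref{def3.1}: the process $\mbox{}^{\varepsilon}K-\bar K$ is a difference of two decreasing $G$-martingales and need not itself be a decreasing $G$-martingale (that class is not closed under subtraction). Proposition~\ref{pro3.4} therefore does not apply, and the bootstrap you describe---bound $h_s^0$ by $L(|{}^\varepsilon Y_s-y|+|{}^\varepsilon Z_s-\sigma^T(x)p|)$ and reabsorb---has no legitimate starting point. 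This is exactly the place where the $G$-theory differs from the linear case, in which the difference of two BSDE solutions is trivially another BSDE solution.

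The paper sidesteps this by a change of variables rather than a comparison of two $G$-BSDEs. It subtracts from $Y^\varepsilon_s$ the \emph{forward} semimartingale $y+\langle p,X_s^{t,x}-x\rangle$, setting
\[
\tilde Y_s=Y^\varepsilon_s-(y+\langle p,X_s^{t,x}-x\rangle),\qquad \tilde Z_s=Z^\varepsilon_s-\sigma^T(X_s^{t,x})p,\qquad \tilde K_s=K^\varepsilon_s.
\]
Because the subtracted process has an It\^o decomposition with no $K$-component, the $K$ is \emph{unchanged}, so $(\tilde Y,\tilde Z,\tilde K)$ is a genuine $G$-BSDE solution with terminal value $0$, and Proposition~\ref{pro3.4} applies directly to yield $\mathbb{\hat E}[\sup_s|\tilde Y_s|^\alpha+(\int|\tilde Z_u|^2du)^{\alpha/2}]\le C\varepsilon^\alpha$. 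From there the paper writes $\tilde Y_t^\varepsilon=\mathbb{\hat E}_t[\tilde Y_t^\varepsilon+(\tilde K_{t+\varepsilon}-\tilde K_t)]$ as a single conditional $G$-expectation of the integrand, and then peels off three error terms $L_\varepsilon$, $M_\varepsilon$, $N_\varepsilon$ (replacing $(\tilde Y,\tilde Z)$ by $(0,0)$, then $X_u^{t,x}$ by $x$, then time $u$ by $t$), each controlled using only the sublinearity of $\mathbb{\hat E}_t$. No auxiliary $G$-BSDE $(\bar Y,\bar Z,\bar K)$ is ever needed.
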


\begin{proof}
For each fixed $(t,x,y,p)\in \lbrack0,T)\times \mathbb{R}^{n}\times
\mathbb{R}\times \mathbb{R}^{n}$, we write $(Y^{\varepsilon},Z^{\varepsilon
},K^{\varepsilon})$ instead of $(\mbox{}^{\varepsilon}Y^{t,x,y,p}%
,\mbox{}^{\varepsilon}Z^{t,x,y,p},\mbox{}^{\varepsilon}K^{t,x,y,p})$ for
simplicity. We have $\mathbb{\hat{E}}[|X_{t+\varepsilon}^{t,x}|^{\gamma
}]<\infty$ for each $\gamma \geq1$ (see \cite{P10, HJPS1}). Thus, by Theorem
\ref{the1.1}, $G$-BSDE (\ref{re-eq2}) has a unique solution $(Y^{\varepsilon
},Z^{\varepsilon},K^{\varepsilon})$ and $Y_{t}^{\varepsilon}\in L_{G}^{\alpha
}(\Omega_{t})$. We set, for $s\in \lbrack t,t+\varepsilon]$,
\[
\tilde{Y}_{s}^{\varepsilon}=Y_{s}^{\varepsilon}-(y+\langle p,X_{s}%
^{t,x}-x\rangle),\tilde{Z}_{s}^{\varepsilon}=Z_{s}^{\varepsilon}-\sigma
^{T}(X_{s}^{t,x})p\text{ and }\tilde{K}_{s}^{\varepsilon}=K_{s}^{\varepsilon}.
\]
Applying It\^{o}'s formula to $\tilde{Y}_{s}^{\varepsilon}$ on
$[t,t+\varepsilon]$, it is easy to verify that $(\tilde{Y}^{\varepsilon},\tilde
{Z}^{\varepsilon},\tilde{K}^{\varepsilon})$ solves the following $G$-BSDE:%
\begin{align*}
\tilde{Y}_{s}^{\varepsilon} &  =\int_{s}^{t+\varepsilon}f(u,\tilde{Y}%
_{u}^{\varepsilon}+y+\langle p,X_{u}^{t,x}-x\rangle,\tilde{Z}_{u}%
^{\varepsilon}+\sigma^{T}(X_{u}^{t,x})p)du+\int_{s}^{t+\varepsilon}\langle
p,b(X_{u}^{t,x})\rangle du\\
&  +\int_{s}^{t+\varepsilon}g_{ij}(u,\tilde{Y}_{u}^{\varepsilon}+y+\langle
p,X_{u}^{t,x}-x\rangle,\tilde{Z}_{u}^{\varepsilon}+\sigma^{T}(X_{u}%
^{t,x})p)d\langle B^{i},B^{j}\rangle_{u}\\
&  +\int_{s}^{t+\varepsilon}\langle p,h_{ij}(X_{u}^{t,x})\rangle d\langle
B^{i},B^{j}\rangle_{u}-\int_{s}^{t+\varepsilon}\tilde{Z}_{u}^{\varepsilon
}dB_{u}-(\tilde{K}_{t+\varepsilon}^{\varepsilon}-\tilde{K}_{s}^{\varepsilon}).
\end{align*}
From Proposition \ref{pro3.4},
\begin{align*}
|\tilde{Y}_{s}^{\varepsilon}|^{\alpha} &  \leq C_{\alpha}\mathbb{\hat{E}}%
_{s}[(\int_{s}^{t+\varepsilon}(|f(u,y+\langle p,X_{u}^{t,x}-x\rangle
,\sigma^{T}(X_{u}^{t,x})p)|+|\langle p,b(X_{u}^{t,x})\rangle|\\
&  +\sum_{i,j=1}^{d}|g_{ij}(u,y+\langle p,X_{u}^{t,x}-x\rangle,\sigma
^{T}(X_{u}^{t,x})p)|+|\langle p,h_{ij}(X_{u}^{t,x})\rangle|)du)^{\alpha}],
\end{align*}%
and
\begin{align*}
&  \mathbb{\hat{E}}[(\int_{t}^{t+\varepsilon}|\tilde{Z}_{u}^{\varepsilon}%
|^{2}du)^{\alpha/2}]\\
&  \leq C_{\alpha}\{ \mathbb{\hat{E}}[(\int_{t}^{t+\varepsilon}(|f(u,y+\langle
p,X_{u}^{t,x}-x\rangle,\sigma^{T}(X_{u}^{t,x})p)|+|\langle p,b(X_{u}%
^{t,x})\rangle|+|\langle p,h_{ij}(X_{u}^{t,x})\rangle|\\
&  +\sum_{i,j=1}^{d}|g_{ij}(u,y+\langle p,X_{u}^{t,x}-x\rangle,\sigma
^{T}(X_{u}^{t,x})p)|)du)^{\alpha}]+\mathbb{\hat{E}}[\sup_{s\in \lbrack
t,t+\varepsilon]}|\tilde{Y}_{s}^{\varepsilon}|^{\alpha}]\}
\end{align*}
hold for some constant $C_{\alpha}>0$, which only depending
on $\alpha$, $T$, $G$ and $L$.
By Proposition \ref{norm} and the Lipschitz assumption, we obtain %
\begin{align*}
&  \mathbb{\hat{E}}[\sup_{s\in \lbrack t,t+\varepsilon]}|\tilde{Y}%
_{s}^{\varepsilon}|^{\alpha}+(\int_{t}^{t+\varepsilon}|\tilde{Z}%
_{u}^{\varepsilon}|^{2}du)^{\alpha/2}]\\
&  \leq C_{1}\varepsilon^{\alpha}\mathbb{\hat{E}}[1+(\frac{1}{\varepsilon}%
\int_{t}^{t+\varepsilon}(|f(u,0,0)|^{\beta}+\sum_{i,j=1}^{d}|g_{ij}%
(u,0,0)|^{\beta})du)^{\alpha/\beta}+\sup_{s\in \lbrack t,t+\varepsilon]}%
|X_{s}^{t,x}|^{\beta}],
\end{align*}
where  $C_{1}$ is a constant depending on $x$, $y$, $p$, $\alpha$, $\beta$, $T$,
$G$ and $L$. Noting that $\mathbb{\hat{E}}[\sup_{s\in \lbrack t,t+\varepsilon
]}|X_{s}^{t,x}|^{\beta}]\leq C_{2}(1+|x|^{\beta})$ (see \cite{P10, HJPS1}),
where $C_{2}$ depends on $T$ and $L$, and the following inequality holds,
\begin{align*}
  \int_{t}^{t+\varepsilon}(|f(u,0,0)|^{\beta}&+\sum_{i,j=1}^{d}|g_{ij}%
(u,0,0)|^{\beta})du
  \leq2^{\beta-1}\{ \varepsilon(|f(t,0,0)|^{\beta}+\sum_{i,j=1}^{d}%
|g_{ij}(t,0,0)|^{\beta})\\
&  +\int_{t}^{t+\varepsilon}(|f(u,0,0)-f(t,0,0)|^{\beta}+\sum_{i,j=1}%
^{d}|g_{ij}(u,0,0)-g_{ij}(t,0,0)|^{\beta})du\}.
\end{align*}
Together with assumption (H4) we get
\begin{equation}
\mathbb{\hat{E}}[\sup_{s\in \lbrack t,t+\varepsilon]}|\tilde{Y}_{s}%
^{\varepsilon}|^{\alpha}+(\int_{t}^{t+\varepsilon}|\tilde{Z}_{u}^{\varepsilon
}|^{2}du)^{\alpha/2}]\leq C_{3}\varepsilon^{\alpha},\label{re-eq4}%
\end{equation}
where $C_{3}$ depends on $x$, $y$, $p$, $\alpha$, $\beta$, $T$, $G$ and $L$.
Now we prove equation (\ref{re-eq3}). Lets consider%
\begin{align*}
  \frac{1}{\varepsilon}\{Y_{t}^{\varepsilon}-y\}
&  =\frac{1}{\varepsilon}\tilde{Y}_{t}^{\varepsilon}=\frac{1}{\varepsilon
}\mathbb{\hat{E}}_{t}[\tilde{Y}_{t}^{\varepsilon}+\tilde{K}_{t+\varepsilon
}^{\varepsilon}-\tilde{K}_{t}^{\varepsilon}]\\
&  =\frac{1}{\varepsilon}\mathbb{\hat{E}}_{t}[\int_{t}^{t+\varepsilon
}f(u,y+\langle p,X_{u}^{t,x}-x\rangle,\sigma^{T}(X_{u}^{t,x})p)du+\int
_{t}^{t+\varepsilon}\langle p,b(X_{u}^{t,x})\rangle du\\
&  +\int_{t}^{t+\varepsilon}g_{ij}(u,y+\langle p,X_{u}^{t,x}-x\rangle
,\sigma^{T}(X_{u}^{t,x})p)d\langle B^{i},B^{j}\rangle_{u}\\
&  +\int_{t}^{t+\varepsilon}\langle p,h_{ij}(X_{u}^{t,x})\rangle d\langle
B^{i},B^{j}\rangle_{u}]+L_{\varepsilon},
\end{align*}
where
\begin{align*}
L_{\varepsilon}=&\frac{1}{\varepsilon}\left\{ \mathbb{\hat{E}}_{t}[\int
_{t}^{t+\varepsilon}f(u,\tilde{Y}_{u}^{\varepsilon}+y+\langle p,X_{u}%
^{t,x}-x\rangle,\tilde{Z}_{u}^{\varepsilon}+\sigma^{T}(X_{u}^{t,x}%
)p)du+\int_{t}^{t+\varepsilon}\langle p,b(X_{u}^{t,x})\rangle du\right.\\
+&\int
_{t}^{t+\varepsilon}g_{ij}(u,\tilde{Y}_{u}^{\varepsilon}+y+\langle
p,X_{u}^{t,x}-x\rangle,\tilde{Z}_{u}^{\varepsilon}+\sigma^{T}(X_{u}%
^{t,x})p)d\langle B^{i},B^{j}\rangle_{u}+\int_{t}^{t+\varepsilon}\langle
p,h_{ij}(X_{u}^{t,x})\rangle d\langle B^{i},B^{j}\rangle_{u}]\\
-&\mathbb{\hat{E}%
}_{t}[\int_{t}^{t+\varepsilon}f(u,y+\langle p,X_{u}^{t,x}-x\rangle,\sigma
^{T}(X_{u}^{t,x})p)du+\int_{t}^{t+\varepsilon}\langle p,b(X_{u}^{t,x})\rangle
du\\
+&\left.\int_{t}^{t+\varepsilon}g_{ij}(u,y+\langle p,X_{u}^{t,x}-x\rangle
,\sigma^{T}(X_{u}^{t,x})p)d\langle B^{i},B^{j}\rangle_{u}+\int_{t}%
^{t+\varepsilon}\langle p,h_{ij}(X_{u}^{t,x})\rangle d\langle B^{i}%
,B^{j}\rangle_{u}]\right\}.
\end{align*}
 It is easy to check that $|L_{\varepsilon}|\leq
\frac{C_{4}}{\varepsilon}\mathbb{\hat{E}}_{t}[\int_{t}^{t+\varepsilon}%
(|\tilde{Y}_{u}^{\varepsilon}|+|\tilde{Z}_{u}^{\varepsilon}|)du]$, where
$C_{4}$ depends on $G$, $L$ and $T$. Thus by equation (\ref{re-eq4}) we get%
\begin{align*}
\mathbb{\hat{E}}[|L_{\varepsilon}|^{\alpha}] &  \leq \frac{C_{4}^{\alpha}%
}{\varepsilon^{\alpha}}\mathbb{\hat{E}}[(\int_{t}^{t+\varepsilon}(|\tilde
{Y}_{u}^{\varepsilon}|+|\tilde{Z}_{u}^{\varepsilon}|)du)^{\alpha}]\\
&  \leq \frac{2^{\alpha-1}C_{4}^{\alpha}}{\varepsilon^{\alpha}}\mathbb{\hat{E}%
}[(\int_{t}^{t+\varepsilon}|\tilde{Y}_{u}^{\varepsilon}|du)^{\alpha}+(\int
_{t}^{t+\varepsilon}|\tilde{Z}_{u}^{\varepsilon}|du)^{\alpha}]\\
&  \leq2^{\alpha-1}C_{4}^{\alpha}\left\{ \mathbb{\hat{E}}[\sup_{s\in \lbrack
t,t+\varepsilon]}|\tilde{Y}_{s}^{\varepsilon}|^{\alpha}]+\varepsilon
^{-\alpha/2}\mathbb{\hat{E}}[(\int_{t}^{t+\varepsilon}|\tilde{Z}%
_{u}^{\varepsilon}|^{2}du)^{\alpha/2}]\right\} \\
&  \leq2^{\alpha-1}C_{4}^{\alpha}C_{3}(\varepsilon^{\alpha}+\varepsilon
^{\alpha/2}),
\end{align*}
which implies $L_{G}^{\alpha}-\lim_{\varepsilon \rightarrow0+}L_{\varepsilon
}=0$. We set
\begin{align*}
M_{\varepsilon}=&\frac{1}{\varepsilon}\left\{ \mathbb{\hat{E}}_{t}%
[\int_{t}^{t+\varepsilon}f(u,y+\langle p,X_{u}^{t,x}-x\rangle,\sigma^{T}%
(X_{u}^{t,x})p)du+\int_{t}^{t+\varepsilon}\langle p,b(X_{u}^{t,x})\rangle
du\right.\\
+&\int_{t}^{t+\varepsilon}g_{ij}(u,y+\langle p,X_{u}^{t,x}-x\rangle
,\sigma^{T}(X_{u}^{t,x})p)d\langle B^{i},B^{j}\rangle_{u}+\int_{t}%
^{t+\varepsilon}\langle p,h_{ij}(X_{u}^{t,x})\rangle d\langle B^{i}%
,B^{j}\rangle_{u}]\\
-&\mathbb{\hat{E}}_{t}[\int_{t}^{t+\varepsilon}%
f(u,y,\sigma^{T}(x)p)du+\langle p,b(x)\rangle \varepsilon+\int_{t}%
^{t+\varepsilon}g_{ij}(u,y,\sigma^{T}(x)p)d\langle B^{i},B^{j}\rangle_{u}%
\\
+&\left.\int_{t}^{t+\varepsilon}\langle p,h_{ij}(x)\rangle d\langle B^{i}%
,B^{j}\rangle_{u}]\right\}.
\end{align*}
By the Lipschitz condition, we can get
$|M_{\varepsilon}|\leq \frac{C_{5}}{\varepsilon}\mathbb{\hat{E}}_{t}[\int
_{t}^{t+\varepsilon}|X_{u}^{t,x}-x|du]$, where $C_{5}$ depends on $p$, $G$,
$L$ and $T$. Noting that $\mathbb{\hat{E}}[\sup_{s\in \lbrack t,t+\varepsilon
]}|X_{s}^{t,x}-x|^{\alpha}]\leq C_{6}(1+|x|^{\alpha})\varepsilon^{\alpha/2}$
(see \cite{P10, HJPS1}), where $C_{6}$ depends on $L$, $G$ and $\alpha$, thus
we obtain%
\[
\mathbb{\hat{E}}[|M_{\varepsilon}|^{\alpha}]\leq C_{5}^{\alpha}\mathbb{\hat
{E}}[\sup_{s\in \lbrack t,t+\varepsilon]}|X_{s}^{t,x}-x|^{\alpha}]\leq
C_{5}^{\alpha}C_{6}(1+|x|^{\alpha})\varepsilon^{\alpha/2},
\]
which implies $L_{G}^{\alpha}-\lim_{\varepsilon \rightarrow0+}M_{\varepsilon
}=0$. Now we set
\begin{align*}
N_{\varepsilon}=&\frac{1}{\varepsilon}\left\{ \mathbb{\hat{E}}%
_{t}[\int_{t}^{t+\varepsilon}f(u,y,\sigma^{T}(x)p)du+\langle p,b(x)\rangle
\varepsilon+\int_{t}^{t+\varepsilon}g_{ij}(u,y,\sigma^{T}(x)p)d\langle
B^{i},B^{j}\rangle_{u}\right.\\
+&\int_{t}^{t+\varepsilon}\langle p,h_{ij}(x)\rangle
d\langle B^{i},B^{j}\rangle_{u}]-\mathbb{\hat{E}}_{t}[\int_{t}^{t+\varepsilon
}f(t,y,\sigma^{T}(x)p)du+\langle p,b(x)\rangle \varepsilon\\
+&\left.\int_{t}%
^{t+\varepsilon}g_{ij}(t,y,\sigma^{T}(x)p)d\langle B^{i},B^{j}\rangle_{u}%
+\int_{t}^{t+\varepsilon}\langle p,h_{ij}(x)\rangle d\langle B^{i}%
,B^{j}\rangle_{u}]\right\}.
\end{align*}
 It is easy to deduce that $|N_{\varepsilon}|\leq 
\frac{C_{7}}{\varepsilon}\mathbb{\hat{E}}_{t}[\int_{t}^{t+\varepsilon
}(|f(u,y,\sigma^{T}(x)p)-f(t,y,\sigma^{T}(x)p)|+\sum_{i,j=1}^{d}%
|g_{ij}(u,y,\sigma^{T}(x)p)-g_{ij}(t,y,\sigma^{T}(x)p)|)du]$, where $C_{7}$
depends on $G$. Then%
\begin{align*}
\mathbb{\hat{E}}[|N_{\varepsilon}|^{\alpha}] &  \leq C_{7}^{\alpha}\frac
{1}{\varepsilon}\mathbb{\hat{E}}[\int_{t}^{t+\varepsilon}(|f(u,y,\sigma
^{T}(x)p)-f(t,y,\sigma^{T}(x)p)|+\sum_{i,j=1}^{d}|g_{ij}(u,y,\sigma
^{T}(x)p)-g_{ij}(t,y,\sigma^{T}(x)p)|)^{\alpha}du]\\
&  \leq C_{7}^{\alpha}(\frac{1}{\varepsilon}\mathbb{\hat{E}}[\int
_{t}^{t+\varepsilon}(|f(u,y,\sigma^{T}(x)p)-f(t,y,\sigma^{T}(x)p)|+\sum
_{i,j=1}^{d}|g_{ij}(u,y,\sigma^{T}(x)p)-g_{ij}(t,y,\sigma^{T}(x)p)|)^{\beta
}du])^{\frac{\alpha}{\beta}}.
\end{align*}
Take limit on both sides of the above inequality and use  assumption (H4), then we have
$$L_{G}^{\alpha}-\lim_{\varepsilon
\rightarrow0+}N_{\varepsilon}=0.$$ On the other hand, since%
\begin{align*}
&  \mathbb{\hat{E}}_{t}[\int_{t}^{t+\varepsilon}f(t,y,\sigma^{T}%
(x)p)du+\langle p,b(x)\rangle \varepsilon+\int_{t}^{t+\varepsilon}%
g_{ij}(t,y,\sigma^{T}(x)p)d\langle B^{i},B^{j}\rangle_{u}+\int_{t}%
^{t+\varepsilon}\langle p,h_{ij}(x)\rangle d\langle B^{i},B^{j}\rangle_{u}]\\
&  =f(t,y,\sigma^{T}(x)p)\varepsilon+\langle p,b(x)\rangle \varepsilon
+\mathbb{\hat{E}}_{t}[(g_{ij}(t,y,\sigma^{T}(x)p)+\langle p,h_{ij}%
(x)\rangle)(\langle B^{i},B^{j}\rangle_{t+\varepsilon}-\langle B^{i}%
,B^{j}\rangle_{t})]\\
&  =(f(t,y,\sigma^{T}(x)p)+\langle p,b(x)\rangle+2G((g_{ij}(t,y,\sigma
^{T}(x)p)+\langle p,h_{ij}(x)\rangle)_{i,j=1}^{d}))\varepsilon.
\end{align*}
Then we have
\begin{align*}
L_{G}^{\alpha}-&\lim_{\varepsilon \rightarrow0+}\frac{1}{\varepsilon
}\{Y_{t}^{\varepsilon}-y\}\\
=&f(t,y,\sigma^{T}(x)p)+\langle p,b(x)\rangle
+2G((g_{ij}(t,y,\sigma^{T}(x)p)+\langle p,h_{ij}(x)\rangle)_{i,j=1}^{d}).
\end{align*} The
proof is complete.
\end{proof}

\section{Some applications}

\subsection{Converse comparison theorem for $G$-BSDEs}

We consider the following $G$-BSDEs:%
\begin{align*}
Y_{t}^{l,\xi}  &  =\xi+\int_{t}^{T}f^{l}(s,Y_{s}^{l,\xi},Z_{s}^{l,\xi}%
)ds+\int_{t}^{T}g_{ij}^{l}(s,Y_{s}^{l,\xi},Z_{s}^{l,\xi})d\langle B^{i}%
,B^{j}\rangle_{s}\\
&  -\int_{t}^{T}Z_{s}^{l,\xi}dB_{s}-(K_{T}^{l,\xi}-K_{t}^{l,\xi}),\ l=1,2,
\end{align*}
where $g_{ij}^{l}=g_{ji}^{l}$.

We first generalized the comparison theorem in \cite{HJPS1}.

\begin{proposition}
\label{con-pro1} Let $f^{l}$ and $g_{ij}^{l}$ satisfy (H1) and (H2) for some
$\beta>1$, $l=1,2$. If $f^{2}-f^{1}+2G((g_{ij}^{2}-g_{ij}^{1})_{i,j=1}%
^{d})\leq0$, then for each $\xi \in L_{G}^{\beta}(\Omega_{T})$, we have
$Y_{t}^{1,\xi}\geq Y_{t}^{2,\xi}$ for $t\in \lbrack0,T]$.
\end{proposition}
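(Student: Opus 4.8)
The plan is to reduce Proposition~\ref{con-pro1} to the comparison theorem (Theorem~\ref{com}) by interpreting the difference of the two generators as a decreasing ``drift'' increment, exactly as in the classical BSDE setting. Fix $\xi\in L_G^\beta(\Omega_T)$ and let $(Y^{1,\xi},Z^{1,\xi},K^{1,\xi})$ be the solution of the $G$-BSDE with data $(f^1,g_{ij}^1)$. The idea is to regard $(Y^{1,\xi},Z^{1,\xi},K^{1,\xi})$ as a (super)solution of a $G$-BSDE with generator $(f^2,g_{ij}^2)$ plus an extra process. Concretely, define
\[
V_t=\int_0^t\bigl(f^1(s,Y_s^{1,\xi},Z_s^{1,\xi})-f^2(s,Y_s^{1,\xi},Z_s^{1,\xi})\bigr)ds+\int_0^t\bigl(g_{ij}^1-g_{ij}^2\bigr)(s,Y_s^{1,\xi},Z_s^{1,\xi})\,d\langle B^i,B^j\rangle_s .
\]
Then $(Y^{1,\xi},Z^{1,\xi},K^{1,\xi})$ solves the $G$-BSDE with generator $(f^2,g_{ij}^2)$, terminal value $\xi$, and the added finite-variation term $V_T-V_t$, whereas $(Y^{2,\xi},Z^{2,\xi},K^{2,\xi})$ solves the same equation with $V\equiv 0$. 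By Theorem~\ref{com}, it suffices to show that $t\mapsto V_t-0=V_t$ is an increasing process; the conclusion $Y_t^{1,\xi}\ge Y_t^{2,\xi}$ then follows immediately. I would also check that $V$ lies in $M_G^\beta(0,T)$ and that $\hat{\mathbb E}[\sup_{t}|V_t|^\beta]<\infty$, which follows from (H1), (H2) and the integrability $Y^{1,\xi}\in S_G^\alpha$, $Z^{1,\xi}\in H_G^\alpha$ guaranteed by Theorem~\ref{the1.1}.

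The main obstacle is precisely the monotonicity of $V$. Unlike the classical $dt$-only case, here $V$ has an increment in the mutual variation $d\langle B^i,B^j\rangle$, and $d\langle B^i,B^j\rangle_s$ is \emph{not} a nonnegative matrix-valued measure in any pathwise sense under the $G$-framework; only certain structural inequalities hold. The correct statement is that for any symmetric-matrix-valued process $(\Lambda_{ij})$ one has, for $s\le t$,
\[
\int_s^t\Lambda_{ij}(u)\,d\langle B^i,B^j\rangle_u \ge -\,2\!\int_s^t G\bigl(-\Lambda_{ij}(u)\bigr)_{i,j=1}^d\,du
\]
in the almost-sure sense with respect to every probability in the representing family (this is a standard consequence of the representation $\langle B\rangle$ having density in $\Gamma$ and the identity $2G(A)=\sup_{\gamma\in\Gamma}\mathrm{tr}[\gamma A]$, cf.\ the $G$-It\^o calculus in \cite{P10, HJPS}). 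Applying this with $\Lambda_{ij}(u)=(g_{ij}^1-g_{ij}^2)(u,Y_u^{1,\xi},Z_u^{1,\xi})$ gives
\[
V_t-V_s\ \ge\ \int_s^t\Bigl(f^1-f^2-2G\bigl((g_{ij}^2-g_{ij}^1)_{i,j=1}^d\bigr)\Bigr)(u,Y_u^{1,\xi},Z_u^{1,\xi})\,du,
\]
and the hypothesis $f^2-f^1+2G((g_{ij}^2-g_{ij}^1)_{i,j=1}^d)\le 0$ makes the integrand nonnegative, so $V_t-V_s\ge 0$. Hence $V$ is increasing.

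One subtlety to address carefully: Theorem~\ref{com} is stated for a single pair $(f,g_{ij})$ with an added RCLL process $V^l$, so I must verify that the hypothesis ``$f^2,g_{ij}^2$ satisfy (H1),(H2)'' is what is invoked there (it is), and that both triples in that theorem use the \emph{same} terminal value $\xi$ (they do). The decreasing $G$-martingale parts $K^{1,\xi},K^{2,\xi}$ play no role beyond being admissible, and $V$ here is continuous, hence trivially RCLL. Thus the proof is: (i) write down $V$ and verify its integrability and RCLL property; (ii) observe $(Y^{1,\xi},Z^{1,\xi},K^{1,\xi})$ solves the $(f^2,g_{ij}^2)$-equation with the extra term $V_T-V_t$; (iii) prove $V$ is increasing using the $d\langle B\rangle$-vs-$G$ inequality together with the sign hypothesis; (iv) invoke Theorem~\ref{com} to conclude $Y_t^{1,\xi}\ge Y_t^{2,\xi}$ for all $t\in[0,T]$.
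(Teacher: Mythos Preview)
Your proposal is correct and follows essentially the same route as the paper. The only cosmetic difference is a mirror symmetry: the paper rewrites $Y^{2,\xi}$ with the generator $(f^1,g_{ij}^1)$ and a \emph{decreasing} process $V$ evaluated along $(Y^{2,\xi},Z^{2,\xi})$, whereas you rewrite $Y^{1,\xi}$ with the generator $(f^2,g_{ij}^2)$ and an \emph{increasing} process $V$ evaluated along $(Y^{1,\xi},Z^{1,\xi})$; both then invoke Theorem~\ref{com}. The key monotonicity step is also the same: the paper writes the explicit decomposition
\[
V_t=\int_0^t\bigl(f^2-f^1+2G((g_{ij}^2-g_{ij}^1)_{i,j})\bigr)\,ds+\Bigl[\int_0^t(g_{ij}^2-g_{ij}^1)\,d\langle B^i,B^j\rangle_s-2\!\int_0^t G\bigl((g_{ij}^2-g_{ij}^1)_{i,j}\bigr)\,ds\Bigr],
\]
using that the bracketed term is a decreasing process, which is exactly the inequality $\int\Lambda_{ij}\,d\langle B^i,B^j\rangle\le 2\!\int G(\Lambda)\,ds$ that you invoke (with $\Lambda$ replaced by $-\Lambda$). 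One small wording fix: Theorem~\ref{the1.1} gives $Y^{1,\xi}\in S_G^{\alpha}$, $Z^{1,\xi}\in H_G^{\alpha}$ only for $1<\alpha<\beta$, so your $V$ will in general lie in $M_G^{\alpha}$ rather than $M_G^{\beta}$; this is harmless since Theorem~\ref{com} can be applied with that smaller exponent.
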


\begin{proof}
From the above G-BSDEs, we have%
\begin{align*}
Y_{t}^{2,\xi}    =&~ \xi+\int_{t}^{T}f^{2}(s,Y_{s}^{2,\xi},Z_{s}^{2,\xi}%
)ds+\int_{t}^{T}g_{ij}^{2}(s,Y_{s}^{2,\xi},Z_{s}^{2,\xi})d\langle B^{i}%
,B^{j}\rangle_{s}\\
&  -\int_{t}^{T}Z_{s}^{2,\xi}dB_{s}-(K_{T}^{2,\xi}-K_{t}^{2,\xi})\\
  =& ~\xi+\int_{t}^{T}f^{1}(s,Y_{s}^{2,\xi},Z_{s}^{2,\xi})ds+\int_{t}^{T}%
g_{ij}^{1}(s,Y_{s}^{2,\xi},Z_{s}^{2,\xi})d\langle B^{i},B^{j}\rangle_{s}\\
&  +V_{T}-V_{t}-\int_{t}^{T}Z_{s}^{2,\xi}dB_{s}-(K_{T}^{2,\xi}-K_{t}^{2,\xi}),
\end{align*}
where
\begin{align*}
V_{t}  =&\int_{0}^{t}(f^{2}-f^{1})(s,Y_{s}^{2,\xi},Z_{s}^{2,\xi})ds+\int
_{0}^{t}(g_{ij}^{2}-g_{ij}^{1})(s,Y_{s}^{2,\xi},Z_{s}^{2,\xi})d\langle
B^{i},B^{j}\rangle_{s}\\
  =&\int_{0}^{t}(f^{2}-f^{1}+2G((g_{ij}^{2}-g_{ij}^{1})_{i,j=1}^{d}%
))(s,Y_{s}^{2,\xi},Z_{s}^{2,\xi})ds\\
 & +\int_{0}^{t}(g_{ij}^{2}-g_{ij}^{1})(s,Y_{s}^{2,\xi},Z_{s}^{2,\xi})d\langle
B^{i},B^{j}\rangle_{s}-\int_{0}^{t}2G((g_{ij}^{2}-g_{ij}^{1})_{i,j=1}%
^{d})(s,Y_{s}^{2,\xi},Z_{s}^{2,\xi})ds.
\end{align*}
By the assumption, it is easy to check that $(V_{t})_{t\leq T}$ is a
decreasing process. Thus, using Theorem \ref{com}, we obtain $Y_{t}^{1,\xi}\geq
Y_{t}^{2,\xi}$ for $t\in \lbrack0,T]$.
\end{proof}

\begin{remark}
Suppose $d=1$, and let $f^{1}=10|z|$, $f^{2}=|z|$, $g^{1}=|z|$ and $g^{2}=2|z|$. It
is easy to check that $f^{2}-f^{1}+2G(g^{2}-g^{1})\leq0$. Thus $f^{2}%
-f^{1}+2G((g_{ij}^{2}-g_{ij}^{1})_{i,j=1}^{d})\leq0$ does not imply $f^{2}\leq
f^{1}$ and $(g_{ij}^{2})_{i,j=1}^{d}\leq(g_{ij}^{1})_{i,j=1}^{d}$.
\end{remark}

Now we give the converse comparison theorem.

\begin{theorem}
\label{con-the2} Let $f^{l}$ and $g_{ij}^{l}$ satisfy (H1), (H2), (H3), (H4)
and (H5) for some $\beta>1$, $l=1,2$. If $Y_{t}^{1,\xi}\geq Y_{t}^{2,\xi}$ for
each $t\in \lbrack0,T]$ and $\xi \in L_{G}^{\beta}(\Omega_{T})$, then
$f^{2}-f^{1}+2G((g_{ij}^{2}-g_{ij}^{1})_{i,j=1}^{d})\leq0$ q.s..
\end{theorem}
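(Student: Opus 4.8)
The plan is to derive the pointwise inequality $f^{2}-f^{1}+2G((g_{ij}^{2}-g_{ij}^{1})_{i,j=1}^{d})\le 0$ by applying the representation theorem (Theorem \ref{re-the1}) to both generators. First I would fix an arbitrary $(t,x,y,p)\in[0,T)\times\mathbb{R}^{n}\times\mathbb{R}\times\mathbb{R}^{n}$ and introduce the decoupled $G$-FBSDE system \eqref{re-eq1}--\eqref{re-eq2} with, say, $b=0$, $h_{ij}=0$ and $\sigma(x)\equiv\sigma_{0}$ a fixed matrix so that $\sigma^{T}(x)p$ ranges over all of $\mathbb{R}^{d}$ as $p$ varies; more simply, since we only need to produce arbitrary values $(y,z)\in\mathbb{R}\times\mathbb{R}^{d}$ of the generators' arguments, I would take $X^{t,x}\equiv x$ (i.e. $b=h_{ij}=\sigma=0$), $p$ chosen so that $\sigma^{T}(x)p$ is irrelevant, and instead run the two $G$-BSDEs $\mbox{}^{\varepsilon}Y^{l}$ on $[t,t+\varepsilon]$ with terminal condition $\xi=y$ and with an extra linear term $\langle z, B_{t+\varepsilon}-B_{t}\rangle$ inserted — that is, apply Theorem \ref{re-the1} in the version where the FBSDE forward component contributes exactly the constant value $y$ and the linear perturbation $\langle p, X_{t+\varepsilon}-x\rangle$ is replaced by $\langle z,B_{t+\varepsilon}-B_t\rangle$, which the proof of Theorem \ref{re-the1} accommodates since $B$ itself is a (degenerate) solution of the forward SDE. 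Concretely, with that choice the representation gives, for $l=1,2$ and for every $(t,y,z)$,
\[
L_{G}^{\alpha}-\lim_{\varepsilon\to 0+}\frac{1}{\varepsilon}\{\mbox{}^{\varepsilon}Y_{t}^{l}-y\}=f^{l}(t,y,z)+2G\big((g_{ij}^{l}(t,y,z))_{i,j=1}^{d}\big).
\]

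Next I would invoke the hypothesis $Y^{1,\xi}_{t}\ge Y^{2,\xi}_{t}$, but applied to the \emph{localized} problems on $[t,t+\varepsilon]$ rather than on $[0,T]$. The point is that the family of localized $G$-BSDEs $\mbox{}^{\varepsilon}Y^{l}$ on $[t,t+\varepsilon]$ with terminal datum $\eta\in L_{G}^{\beta}(\Omega_{t+\varepsilon})$ defines exactly the nonlinear expectation $\mathbb{\tilde E}^{l}$ restricted to that interval; by the flow/consistency property recorded before Section 3 and the assumption (H5) (which makes the construction of $\mathbb{\tilde E}^{l}$ legitimate), the global comparison $Y^{1,\xi}_{t}\ge Y^{2,\xi}_{t}$ for all $\xi\in L_{G}^{\beta}(\Omega_{T})$ forces $\mbox{}^{\varepsilon}Y_{t}^{1}\ge\mbox{}^{\varepsilon}Y_{t}^{2}$ q.s. for the localized problems with terminal condition $y+\langle z,B_{t+\varepsilon}-B_{t}\rangle\in L_{G}^{\beta}(\Omega_{t+\varepsilon})$. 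Hence $\frac{1}{\varepsilon}(\mbox{}^{\varepsilon}Y_{t}^{1}-y)\ge\frac{1}{\varepsilon}(\mbox{}^{\varepsilon}Y_{t}^{2}-y)$ q.s., and since $L_{G}^{\alpha}$-convergence implies convergence in distribution under $\mathbb{\hat E}$ and preserves the a.s./q.s. order (taking a q.s.-convergent subsequence), passing to the limit yields
\[
f^{1}(t,y,z)+2G\big((g_{ij}^{1}(t,y,z))_{i,j=1}^{d}\big)\ \ge\ f^{2}(t,y,z)+2G\big((g_{ij}^{2}(t,y,z))_{i,j=1}^{d}\big)
\]
for this fixed $t$ and q.s.\ on $\Omega_{t}$; using the subadditivity $2G((g_{ij}^{2})_{i,j})\le 2G((g_{ij}^{1})_{i,j})+2G((g_{ij}^{2}-g_{ij}^{1})_{i,j})$ is \emph{not} what we want — instead I would simply rewrite the displayed inequality as $f^{2}-f^{1}+2G((g_{ij}^{2})_{i,j})-2G((g_{ij}^{1})_{i,j})\le 0$ and then note this is the claim once we verify that the two-generator representation combines the $g^{l}$ into a single $G$ evaluation, which it does not literally; so the cleaner route is to run the \emph{difference} FBSDE directly.

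Therefore the main step I would actually carry out is: apply Theorem \ref{re-the1} to the single $G$-BSDE whose generator is built so that its representation limit is precisely $f^{2}-f^{1}+2G((g_{ij}^{2}-g_{ij}^{1})_{i,j=1}^{d})$ — mimicking exactly the computation in Proposition \ref{con-pro1}, where $V_{t}$ is written as $\int_{0}^{t}(f^{2}-f^{1}+2G((g^{2}_{ij}-g^{1}_{ij})_{i,j}))\,ds$ plus a $G$-martingale-type correction. Run the localized problems for $Y^{1}$ and for the $V$-perturbed version of $Y^{2}$ on $[t,t+\varepsilon]$ with common terminal value $y+\langle z,B_{t+\varepsilon}-B_{t}\rangle$; the hypothesis gives $\mbox{}^{\varepsilon}Y_{t}^{1}\ge\mbox{}^{\varepsilon}Y_{t}^{2}$, subtract $y$, divide by $\varepsilon$, and take the $L_{G}^{\alpha}$-limit using Theorem \ref{re-the1} on each side; the difference of the two limits is exactly $-\big(f^{2}-f^{1}+2G((g_{ij}^{2}-g_{ij}^{1})_{i,j=1}^{d})\big)(t,y,z)\ge 0$ q.s.\ on $\Omega_{t}$, for each fixed $(t,y,z)$.

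The hard part will be upgrading this ``for each fixed $t$, q.s.\ on $\Omega_{t}$, and for each fixed $(y,z)$'' conclusion to the genuinely pointwise/quasi-sure statement ``$f^{2}-f^{1}+2G((g_{ij}^{2}-g_{ij}^{1})_{i,j=1}^{d})\le 0$ q.s.'' as a process on $[0,T]\times\Omega_{T}$. To do this I would: (i) use continuity in $t$ from (H3) together with continuity of $G$ and of the generators in $(y,z)$ from (H2) to reduce to a countable dense set of $(t,y,z)$; (ii) for each such rational triple obtain a q.s.\ inequality from the argument above; (iii) intersect the countably many q.s.-null exceptional sets and then extend from the countable dense set to all $(t,y,z)$ by joint continuity. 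One must be careful that the $L_{G}^{\alpha}$-limit in Theorem \ref{re-the1} a priori only gives an element of $L_{G}^{\alpha}(\Omega_{t})$ whose values are controlled q.s.; extracting a q.s.-convergent subsequence (which exists since $L_{G}^{\alpha}$-convergence along a subsequence can be taken q.s.\ up to a further subsequence in this framework) is what lets the pointwise order pass to the limit. This capacity/quasi-sure bookkeeping, rather than any new estimate, is the only genuine obstacle; everything else is a direct application of Theorem \ref{re-the1} and the computation already displayed in the proof of Proposition \ref{con-pro1}.
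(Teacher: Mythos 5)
There is a genuine gap at the core of your argument: you never actually produce the combined term $2G((g_{ij}^{2}-g_{ij}^{1})_{i,j=1}^{d})$. With your choice of test data ($b=0$, $h_{ij}=0$, terminal value $y+\langle z,B_{t+\varepsilon}-B_{t}\rangle$), Theorem \ref{re-the1} gives the limits $f^{l}(t,y,z)+2G((g_{ij}^{l}(t,y,z))_{i,j=1}^{d})$, and the hypothesis then yields only $f^{2}-f^{1}+2G((g_{ij}^{2})_{i,j=1}^{d})-2G((g_{ij}^{1})_{i,j=1}^{d})\leq0$. Since $G$ is sublinear, $G(A)-G(B)\leq G(A-B)$, so this is strictly weaker than the claim in general (for $d=1$ with $G(a)=\frac{1}{2}(\overline{\sigma}^{2}a^{+}-\underline{\sigma}^{2}a^{-})$, $g^{1}=c$, $g^{2}=c+\delta$, $c+\delta>0>\delta$, one has $G(g^{2})-G(g^{1})=\frac{1}{2}\overline{\sigma}^{2}\delta<\frac{1}{2}\underline{\sigma}^{2}\delta=G(g^{2}-g^{1})$ when $\overline{\sigma}^{2}>\underline{\sigma}^{2}$). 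You notice this yourself, but your fallback does not repair it. Running ``the difference FBSDE'' or a ``$V$-perturbed version of $Y^{2}$'' cannot work as described: the hypothesis $Y^{1,\xi}\geq Y^{2,\xi}$ compares the solutions of the two \emph{original} equations only, so once you modify the equation defining the second solution you lose the right to invoke it; and rewriting the $Y^{2}$-equation with generators $(f^{1},g_{ij}^{1})$ plus a finite-variation term (as in Proposition \ref{con-pro1}) does not change the process $Y^{2}$ at all, so the $L_{G}^{\alpha}$-limit of $\frac{1}{\varepsilon}(\mbox{}^{\varepsilon}Y_{t}^{2}-y)$ is still $f^{2}(t,y,z)+2G((g_{ij}^{2}(t,y,z))_{i,j=1}^{d})$; the difference of the two limits is $f^{1}-f^{2}+2G((g_{ij}^{1})_{i,j=1}^{d})-2G((g_{ij}^{2})_{i,j=1}^{d})$, not $-(f^{2}-f^{1}+2G((g_{ij}^{2}-g_{ij}^{1})_{i,j=1}^{d}))$ as you assert.

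The missing idea is precisely the quadratic-variation degree of freedom in Theorem \ref{re-the1} that you set to zero. The paper tests both nonlinear expectations on $\eta_{\varepsilon}=y+\langle z,h_{ij}\rangle(\langle B^{i},B^{j}\rangle_{t+\varepsilon}-\langle B^{i},B^{j}\rangle_{t})+\langle z,B_{t+\varepsilon}-B_{t}\rangle$ with constant vectors $h_{ij}=h_{ji}$, so that the representation limit is $f^{l}(t,y,z)+2G((g_{ij}^{l}(t,y,z)+\langle z,h_{ij}\rangle)_{i,j=1}^{d})$; choosing $h_{ij}$ with $\langle z,h_{ij}\rangle=-g_{ij}^{1}(t,y,z)$ makes the $l=1$ limit equal to $f^{1}(t,y,z)$ and the $l=2$ limit equal to $f^{2}(t,y,z)+2G((g_{ij}^{2}-g_{ij}^{1})(t,y,z))$, and the inequality $\mathbb{\tilde{E}}_{t}^{1}[\eta_{\varepsilon}]\geq\mathbb{\tilde{E}}_{t}^{2}[\eta_{\varepsilon}]$ then passes in the limit to exactly the claimed bound at the fixed $(t,y,z)$, q.s. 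The surrounding steps you describe --- localization via (H5) and the consistency of $\mathbb{\tilde{E}}_{t}^{l}$, q.s.-convergent subsequences from $L_{G}^{\alpha}$-convergence, and the upgrade to all $(t,y,z)$ via (H2)--(H3) and a countable dense set --- are fine and agree with the paper; but without the $\langle z,h_{ij}\rangle$ shift inside $G$, no algebraic manipulation of the two separate limits can recover the stronger single-$G$ inequality.
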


\begin{proof}
For simplicity, we take the notation $\mathbb{\tilde{E}}_{t}^{l}[\xi
]=Y_{t}^{l,\xi}$, $l=1,2$. For each fixed $(t,y,z)\in \lbrack0,T)\times
\mathbb{R}\times \mathbb{R}^{d}$, lets consider%
\[
\eta_{\varepsilon}=y+\langle z,h_{ij}\rangle(\langle B^{i},B^{j}%
\rangle_{t+\varepsilon}-\langle B^{i},B^{j}\rangle_{t})+\langle
z,B_{t+\varepsilon}-B_{t}\rangle,
\]
where $h_{ij}=h_{ji}\in \mathbb{R}^{d}$. By Theorem \ref{re-the1}, we have, for
each $\alpha \in(1,\beta)$,%
\[
L_{G}^{\alpha}-\lim_{\varepsilon \rightarrow0+}\frac{1}{\varepsilon
}(\mathbb{\tilde{E}}_{t}^{l}[\eta_{\varepsilon}]-y)=f^{l}(t,y,z)+2G((g_{ij}%
^{l}(t,y,z)+\langle z,h_{ij}\rangle)_{i,j=1}^{d}).
\]
Since $\mathbb{\tilde{E}}_{t}^{1}[\eta_{\varepsilon}]\geq \mathbb{\tilde{E}}%
_{t}^{2}[\eta_{\varepsilon}]$, then $$f^{1}(t,y,z)+2G((g_{ij}%
^{1}(t,y,z)+\langle z,h_{ij}\rangle)_{i,j=1}^{d})\geq f^{2}(t,y,z)+2G((g_{ij}%
^{2}(t,y,z)+\langle z,h_{ij}\rangle)_{i,j=1}^{d}) q.s..$$ Take a $h_{ij}$ such
that $\langle z,h_{ij}\rangle=-g_{ij}^{1}(t,y,z)$. Therefore  $\{f^{2}%
-f^{1}+2G((g_{ij}^{2}-g_{ij}^{1})_{i,j=1}^{d})\}(t,y,z)\leq0$ q.s.. By the
assumptions (H2) and (H3), it is easy to deduce that $f^{2}-f^{1}%
+2G((g_{ij}^{2}-g_{ij}^{1})_{i,j=1}^{d})\leq0$ q.s..
\end{proof}

In the following, we use the notation $\mathbb{\tilde{E}}_{t}^{l}[\xi
]=Y_{t}^{l,\xi}$, $l=1,2$.

\begin{corollary}
\label{con-cor3} Let $f^{l}$ and $g_{ij}^{l}$ be deterministic functions and
satisfy (H1), (H2), (H3) and (H5) for some $\beta>1$, $l=1,2$. If
$\mathbb{\tilde{E}}^{1}[\xi]\geq \mathbb{\tilde{E}}^{2}[\xi]$ for each $\xi \in
L_{G}^{\beta}(\Omega_{T})$, then $f^{2}-f^{1}+2G((g_{ij}^{2}-g_{ij}%
^{1})_{i,j=1}^{d})\leq0$.
\end{corollary}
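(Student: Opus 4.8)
The plan is to deduce Corollary~\ref{con-cor3} from Theorem~\ref{con-the2} by observing that when $f^l$ and $g_{ij}^l$ are deterministic, the hypothesis $\mathbb{\tilde E}^1[\xi]\geq\mathbb{\tilde E}^2[\xi]$ at time $0$ can be \emph{propagated to all times} $t\in[0,T]$, so that the full comparison hypothesis of Theorem~\ref{con-the2} becomes available. First I would note that, since $f^l,g_{ij}^l$ satisfy (H1), (H2) and (H5) with deterministic (hence time-measurable and, modulo (H3), continuous) coefficients, assumption (H4) is automatic: deterministic continuous functions are trivially $L_G^\beta(\Omega_t)$-valued and the averaged-increment limit in (H4) vanishes by ordinary continuity and dominated convergence. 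So the hypotheses of Theorem~\ref{con-the2} are met except that we only know the nonlinear expectations are ordered at $t=0$, not at every $t$.

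The key step is therefore to upgrade $\mathbb{\tilde E}^1[\xi]\geq\mathbb{\tilde E}^2[\xi]$ to $Y_t^{1,\xi}\geq Y_t^{2,\xi}$ for all $t$. I would use the flow/consistency property of the nonlinear expectations $\mathbb{\tilde E}^l_\cdot$ recalled at the end of Section~2: for any $\xi\in L_G^\beta(\Omega_T)$ and any $t$, $\mathbb{\tilde E}^l_0[\xi]=\mathbb{\tilde E}^l_0[\mathbb{\tilde E}^l_t[\xi]]$. Fix $t\in(0,T]$ and, to argue by contradiction, suppose the claim $Y_t^{1,\xi}\geq Y_t^{2,\xi}$ fails for some $\xi$ on a set of positive capacity; because $f^l,g_{ij}^l$ are deterministic the generators are ``translation-invariant enough'' that one can localize: replacing $\xi$ by a suitable $L_G^\beta(\Omega_T)$ random variable that is $\mathcal{F}_t$-measurable (e.g. $\xi'=\mathbb{\tilde E}^1_t[\xi]\wedge N$ truncated, or simply an arbitrary $\eta\in L_G^\beta(\Omega_t)$), we have $\mathbb{\tilde E}^l_t[\eta]=\eta$ for $\mathcal{F}_t$-measurable $\eta$ only when generators vanish on such $\eta$; more robustly, I would instead run the representation-theorem argument of Theorem~\ref{con-the2} but anchored at time $0$. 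Concretely: for fixed $(t,y,z)$ form exactly the test variable $\eta_\varepsilon=y+\langle z,h_{ij}\rangle(\langle B^i,B^j\rangle_{t+\varepsilon}-\langle B^i,B^j\rangle_t)+\langle z,B_{t+\varepsilon}-B_t\rangle$ from the proof of Theorem~\ref{con-the2}, but then apply $\mathbb{\tilde E}^l_0$ to $\mathbb{\tilde E}^l_t[\eta_\varepsilon]$. Since the generators are deterministic, $\mathbb{\tilde E}^l_t[\eta_\varepsilon]$ is itself (asymptotically) a deterministic perturbation of $y$ of size $O(\varepsilon)$, and $\mathbb{\tilde E}^l_0$ is monotone and constant-preserving, so dividing by $\varepsilon$ and letting $\varepsilon\to0$ yields the same one-sided inequality between $f^l(t,y,z)+2G((g_{ij}^l(t,y,z)+\langle z,h_{ij}\rangle)_{i,j})$ as before — now purely as a pointwise inequality of deterministic numbers rather than a q.s. statement.

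I would then finish exactly as in Theorem~\ref{con-the2}: choosing $h_{ij}$ with $\langle z,h_{ij}\rangle=-g_{ij}^1(t,y,z)$ gives $\{f^2-f^1+2G((g_{ij}^2-g_{ij}^1)_{i,j=1}^d)\}(t,y,z)\leq0$ for each fixed $(t,y,z)$, and since everything here is deterministic the ``q.s.'' disappears, yielding the plain inequality $f^2-f^1+2G((g_{ij}^2-g_{ij}^1)_{i,j=1}^d)\leq0$ on $[0,T)\times\mathbb{R}\times\mathbb{R}^d$; continuity in $t$ (H3) extends it to $t=T$.

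The main obstacle will be the propagation step — making rigorous the claim that ordering of $\mathbb{\tilde E}^1_0$ and $\mathbb{\tilde E}^2_0$ on all of $L_G^\beta(\Omega_T)$ forces the local (in time) one-sided comparison of generators. The cleanest route, and the one I would actually write, is the second one above: rather than trying to prove $Y_t^1\geq Y_t^2$ for all $t$ as an intermediate step, directly adapt the computation in the proof of Theorem~\ref{con-the2} by inserting the outer operator $\mathbb{\tilde E}^l_0$ (which is monotone, so preserves the inequality $\mathbb{\tilde E}^1_t[\eta_\varepsilon]\geq\mathbb{\tilde E}^2_t[\eta_\varepsilon]$ — wait, this inequality is exactly what we are trying to get, so one must instead use $\mathbb{\tilde E}^1_0[\eta_\varepsilon]\geq\mathbb{\tilde E}^2_0[\eta_\varepsilon]$ together with the deterministic representation $\mathbb{\tilde E}^l_0[\eta_\varepsilon]=y+\varepsilon\,\rho^l(t,y,z)+o(\varepsilon)$ with $\rho^l(t,y,z)=f^l(t,y,z)+2G((g_{ij}^l(t,y,z)+\langle z,h_{ij}\rangle)_{i,j})$, valid because determinacy of the coefficients makes the time-$0$ value of the short-horizon G-BSDE agree to first order with its time-$t$ value). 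The delicate point is justifying that $L_G^\alpha$-convergence of $\varepsilon^{-1}(\mathbb{\tilde E}^l_t[\eta_\varepsilon]-y)$ survives composition with $\mathbb{\tilde E}^l_0$ and reduces to convergence of deterministic numbers; this follows from Proposition~\ref{pro3.4}-type estimates bounding $\|\mathbb{\tilde E}^l_t[\eta_\varepsilon]-y\|$ and the fact that a deterministic generator produces, at first order, a deterministic increment (the martingale and $K$ terms contributing only at the level captured by the $2G(\cdot)$ in Theorem~\ref{re-the1}).
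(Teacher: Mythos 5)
Your final argument (the ``cleanest route'' you settle on, after discarding the propagation-to-all-$t$ detour) is essentially the paper's own proof: the paper simply observes that, the generators being deterministic, $\mathbb{\tilde{E}}_{t}^{l}[\eta_{\varepsilon}]=\mathbb{\tilde{E}}^{l}[\eta_{\varepsilon}]$ (a constant, since $\eta_{\varepsilon}$ depends only on increments after $t$ and constants are preserved by (H5)), so the time-$0$ ordering applied to $\xi=\eta_{\varepsilon}$ feeds directly into the proof of Theorem~\ref{con-the2}, with the choice $\langle z,h_{ij}\rangle=-g_{ij}^{1}(t,y,z)$ as before. Your remark that (H4) is automatic for deterministic generators satisfying (H3) is correct and is a point the paper leaves implicit, since Theorem~\ref{re-the1} is still invoked.
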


\begin{proof}
Taking $\eta_{\varepsilon}$ as in Theorem \ref{con-the2}, since $f^{l}$ and
$g_{ij}^{l}$ are deterministic, we could get $\mathbb{\tilde{E}}_{t}%
^{l}[\eta_{\varepsilon}]=\mathbb{\tilde{E}}^{l}[\eta_{\varepsilon}]$, for
$l=1$, $2$. And the proof in Theorem \ref{con-the2} still holds true.
\end{proof}

\subsection{Some equivalent relations}

We consider the following $G$-BSDE:%
\begin{align}
Y_{t}  &  =\xi+\int_{t}^{T}f(s,Y_{s},Z_{s})ds+\int_{t}^{T}g_{ij}(s,Y_{s}%
,Z_{s})d\langle B^{i},B^{j}\rangle_{s}\nonumber \\
&  -\int_{t}^{T}Z_{s}dB_{s}-(K_{T}-K_{t}), \label{som-eq1}%
\end{align}
where $g_{ij}=g_{ji}$. We use the notation $\mathbb{\tilde{E}}_{t}[\xi]=Y_{t}$.

\begin{proposition}
\label{som-pro1} Let $f$ and $g_{ij}$ satisfy (H1), (H2), (H3), (H4) and (H5)
for some $\beta>1$ and fix $\alpha \in(1,\beta)$. Then we have

\begin{description}
\item[(1)] $\mathbb{\tilde{E}}_{t}[\xi+\eta]=\mathbb{\tilde{E}}_{t}[\xi]+\eta$
for $t\in \lbrack0,T]$, $\xi \in L_{G}^{\alpha}(\Omega_{T})$ and $\eta \in
L_{G}^{\alpha}(\Omega_{t})$ if and only if for each $t\in \lbrack0,T]$, $y$,
$y^{\prime}\in \mathbb{R}$, $z\in \mathbb{R}^{d}$,
\begin{equation}
f(t,y,z)-f(t,y^{\prime},z)+2G((g_{ij}(t,y,z)-g_{ij}(t,y^{\prime}%
,z))_{i,j=1}^{d})=0; \label{som-eq2}%
\end{equation}

\item[(2)] $\mathbb{\tilde{E}}_{t}[\xi+\eta]\leq \mathbb{\tilde{E}}_{t}%
[\xi]+\mathbb{\tilde{E}}_{t}[\eta]$ for $t\in \lbrack0,T]$, $\xi \in
L_{G}^{\alpha}(\Omega_{T})$ and $\eta \in L_{G}^{\alpha}(\Omega_{T})$ if and
only if for each $t\in \lbrack0,T]$, $y$, $y^{\prime}\in \mathbb{R}$, $z$,
$z^{\prime}\in \mathbb{R}^{d}$,%
\begin{align}
0  &  \geq f(t,y+y^{\prime},z+z^{\prime})-f(t,y,z)-f(t,y^{\prime},z^{\prime
})\nonumber \\
&  +2G((g_{ij}(t,y+y^{\prime},z+z^{\prime})-g_{ij}(t,y,z)-g_{ij}(t,y^{\prime
},z^{\prime}))_{i,j=1}^{d}); \label{som-eq3}%
\end{align}

\item[(3)] $\mathbb{\tilde{E}}_{t}[\lambda \xi+(1-\lambda)\eta]\leq
\lambda \mathbb{\tilde{E}}_{t}[\xi]+(1-\lambda)\mathbb{\tilde{E}}_{t}[\eta]$
for $t\in \lbrack0,T]$, $\lambda \in \lbrack0,1]$, $\xi \in L_{G}^{\alpha}%
(\Omega_{T})$ and $\eta \in L_{G}^{\alpha}(\Omega_{T})$ if and only if for each
$t\in \lbrack0,T]$, $y$, $y^{\prime}\in \mathbb{R}$, $z$, $z^{\prime}%
\in \mathbb{R}^{d}$, $\lambda \in \lbrack0,1]$,%
\begin{align}
0  &  \geq f(t,\lambda y+(1-\lambda)y^{\prime},\lambda z+(1-\lambda)z^{\prime
})-\lambda f(t,y,z)-(1-\lambda)f(t,y^{\prime},z^{\prime})\nonumber \\
&  +2G((g_{ij}(t,\lambda y+(1-\lambda)y^{\prime},\lambda z+(1-\lambda
)z^{\prime})-\lambda g_{ij}(t,y,z)-(1-\lambda)g_{ij}(t,y^{\prime},z^{\prime
}))_{i,j=1}^{d}); \label{som-eq4}%
\end{align}

\item[(4)] $\mathbb{\tilde{E}}_{t}[\lambda \xi]=\lambda \mathbb{\tilde{E}}%
_{t}[\xi]$ for $t\in \lbrack0,T]$, $\lambda \geq0$ and $\xi \in L_{G}^{\alpha
}(\Omega_{T})$ if and only if for each $t\in \lbrack0,T]$, $y\in \mathbb{R}$,
$z\in \mathbb{R}^{d}$, $\lambda \geq0$,%
\begin{equation}
\begin{split}
f(t,\lambda y,\lambda z)-\lambda f(t,y,z)=&2G((\lambda g_{ij}(t,y,z)-g_{ij}%
(t,\lambda y,\lambda z))_{i,j=1}^{d})\\
=&-2G((g_{ij}(t,\lambda y,\lambda
z)-\lambda g_{ij}(t,y,z))_{i,j=1}^{d}).\label{som-eq5}%
\end{split}
\end{equation}
\end{description}
\end{proposition}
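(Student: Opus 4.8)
The plan is to prove all four equivalences along the same lines, splitting each into the implication ``pointwise condition on $(f,g_{ij})$ $\Rightarrow$ property of $\mathbb{\tilde{E}}_t$'', which is a comparison argument, and its converse, which is an application of the representation theorem in the manner of Theorem \ref{con-the2}. It is convenient to first reduce to $\xi,\eta\in L_G^{\beta}(\Omega_T)$, where Theorems \ref{the1.1} and \ref{com} apply directly; the general case $\xi,\eta\in L_G^{\alpha}(\Omega_T)$ then follows since $\xi\mapsto\mathbb{\tilde{E}}_t[\xi]$ is Lipschitz for $\Vert\cdot\Vert_{\alpha,G}$ (Proposition \ref{pro3.4} applied to the difference of two $G$-BSDEs with the same Lipschitz generator) and $L_G^{\beta}$ is $\Vert\cdot\Vert_{\alpha,G}$-dense in $L_G^{\alpha}$.

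For the ``only if'' directions, fix $t\in[0,T)$ and, for $(y,z)\in\mathbb{R}\times\mathbb{R}^{d}$ and a deterministic symmetric matrix $c=(c_{ij})$, set
\[
\eta_{\varepsilon}^{y,z,c}=y+c_{ij}(\langle B^{i},B^{j}\rangle_{t+\varepsilon}-\langle B^{i},B^{j}\rangle_{t})+\langle z,B_{t+\varepsilon}-B_{t}\rangle .
\]
Realising $\eta_{\varepsilon}^{y,z,c}$ as the terminal value of the forward--backward system (\ref{re-eq1})--(\ref{re-eq2}) with $b=0$, $\sigma$ a suitable constant matrix and constant $h_{ij}$, Theorem \ref{re-the1} gives, for each $\alpha\in(1,\beta)$,
\[
L_{G}^{\alpha}-\lim_{\varepsilon\to0+}\frac{1}{\varepsilon}\{\mathbb{\tilde{E}}_{t}[\eta_{\varepsilon}^{y,z,c}]-y\}=f(t,y,z)+2G\big((g_{ij}(t,y,z)+c_{ij})_{i,j=1}^{d}\big).
\]
Now observe the identities $\eta_{\varepsilon}^{y,z,c}+\eta_{\varepsilon}^{y',z',c'}=\eta_{\varepsilon}^{y+y',z+z',c+c'}$, $\lambda\eta_{\varepsilon}^{y,z,c}+(1-\lambda)\eta_{\varepsilon}^{y',z',c'}=\eta_{\varepsilon}^{\lambda y+(1-\lambda)y',\lambda z+(1-\lambda)z',\lambda c+(1-\lambda)c'}$, $\lambda\eta_{\varepsilon}^{y,z,c}=\eta_{\varepsilon}^{\lambda y,\lambda z,\lambda c}$ for $\lambda\ge0$, and $\eta_{\varepsilon}^{y',z,c}+y=\eta_{\varepsilon}^{y+y',z,c}$. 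Inserting these into the assumed property of $\mathbb{\tilde{E}}_t$, dividing by $\varepsilon$ and letting $\varepsilon\to0+$ through the displayed limit, one obtains in each case a relation between sums of terms $f(t,\cdot,\cdot)+2G((g_{ij}(t,\cdot,\cdot)+c_{ij})_{i,j})$, valid q.s.\ for every deterministic $c$. Exchanging the ``q.s.'' with the quantifier over $c$ (continuity of $G$ plus a countable dense family of matrices $c$), one may then substitute the $\omega$-dependent values $c_{ij}=-g_{ij}(t,y,z)$ (and, for (2)--(3), also $c_{ij}'=-g_{ij}(t,y',z')$), which annihilate the spurious $G$-terms and yield (\ref{som-eq2}), (\ref{som-eq3}), (\ref{som-eq4}) respectively; for (\ref{som-eq5}) one applies positive homogeneity twice, once with $c_{ij}=-g_{ij}(t,y,z)$ and once with $c_{ij}=-\lambda^{-1}g_{ij}(t,\lambda y,\lambda z)$, obtaining the two advertised forms of $f(t,\lambda y,\lambda z)-\lambda f(t,y,z)$. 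Finally (H2)--(H3) upgrade ``for each fixed $(t,y,z)$, q.s.'' to ``q.s., for all $(t,y,z)$'' and cover the degenerate values ($z=0$, $\lambda\in\{0,1\}$) by continuity.

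For the ``if'' directions, let $(Y,Z,K)$ and $(Y',Z',K')$ solve (\ref{som-eq1}) with terminal conditions $\xi$ and $\eta$. Then $Y_{s}+\eta$ (case (1), on $[t,T]$), $Y_{s}+Y_{s}'$ (case (2)), $\lambda Y_{s}+(1-\lambda)Y_{s}'$ (case (3)) and $\lambda Y_{s}$ (case (4)) each satisfy a $G$-BSDE with the same $(f,g_{ij})$ and with terminal value $\xi+\eta$, $\xi+\eta$, $\lambda\xi+(1-\lambda)\eta$, $\lambda\xi$, plus an added finite-variation process $V_{s}=\int_{0}^{s}p_{u}\,du+\int_{0}^{s}q_{ij}^{u}\,d\langle B^{i},B^{j}\rangle_{u}$ whose integrands, by (H2), are bounded multiples of $|\eta|$, $|Y'|$ or $|Y|$ and hence lie in the appropriate spaces. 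Using the hypothesis---identity (\ref{som-eq2}), resp.\ the inequalities (\ref{som-eq3})--(\ref{som-eq4}), resp.\ the two identities (\ref{som-eq5})---to express $p_{u}$ through $Q^{u}=(q_{ij}^{u})_{i,j=1}^{d}$, together with the sublinearity and positive homogeneity of $G$ and the q.s.\ bound $\int_{s}^{t}q_{ij}^{u}\,d\langle B^{i},B^{j}\rangle_{u}\le\int_{s}^{t}2G(Q^{u})\,du$, one reads off the monotonicity of $V$: it is nondecreasing in cases (2) and (3), so the comparison theorem (Theorem \ref{com}) gives $\mathbb{\tilde{E}}_{t}[\xi]+\mathbb{\tilde{E}}_{t}[\eta]\ge\mathbb{\tilde{E}}_{t}[\xi+\eta]$, resp.\ the convexity inequality; in case (1) the same computation carried out for $Y_{s}+\eta$ and for $\hat{Y}_{s}-\eta$ (with $\hat{Y}$ the solution for $\xi+\eta$) produces a nonincreasing $V$ each time, whence $\mathbb{\tilde{E}}_{t}[\xi+\eta]\ge\mathbb{\tilde{E}}_{t}[\xi]+\eta$ and $\mathbb{\tilde{E}}_{t}[\xi]+\eta\ge\mathbb{\tilde{E}}_{t}[\xi+\eta]$; in case (4) the first identity in (\ref{som-eq5}) forces $V$ nonincreasing and the second forces it nondecreasing, so $V\equiv0$ and $\lambda Y$ is exactly the solution for $\lambda\xi$.

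I expect the main obstacle to be the sign bookkeeping in the ``if'' direction: one must use the sublinearity of $G$ in the correct direction so that $V$ comes out monotone the way Theorem \ref{com} demands, and one must notice that the conditions in (1) and (4) are symmetric enough to produce equality, not just a single inequality. The remaining points are routine: the passage between the $L_G^{\alpha}$-statement and the $L_G^{\beta}$-theory as explained above; the degenerate parameter values handled through (H2)--(H3) as in Theorem \ref{con-the2}; and the fact that for (2)--(3) the martingale part of the combined equation, $K+K'$ resp.\ $\lambda K+(1-\lambda)K'$, is decreasing but---$\mathbb{\hat{E}}$ being only sublinear---need not be a $G$-martingale, a point accommodated within the hypotheses of the comparison theorem of \cite{HJPS1}.
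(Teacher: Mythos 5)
Your proposal is correct and follows essentially the same architecture as the paper: the ``only if'' halves via Theorem \ref{re-the1} applied to terminal values of the form $y+c_{ij}(\langle B^{i},B^{j}\rangle_{t+\varepsilon}-\langle B^{i},B^{j}\rangle_{t})+\langle z,B_{t+\varepsilon}-B_{t}\rangle$ with the constants chosen to cancel the $G$-terms (your countable-dense-$c$ device to justify plugging in the $\omega$-dependent value $-g_{ij}(t,y,z)$ is in fact more careful than the paper's ``take $h_{ij}$ such that\ldots''), and the ``if'' halves via the comparison theorem after rewriting the combined process as a solution with an added monotone finite-variation part. Two points of divergence are worth recording. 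First, in (1) and (4) the paper never invokes Theorem \ref{com}: using (\ref{som-eq2}) (resp.\ both identities in (\ref{som-eq5})) together with the q.s.\ inequality $\int\kappa_{ij}\,d\langle B^{i},B^{j}\rangle_{u}\le\int 2G((\kappa_{ij}))\,du$ applied to both $\kappa$ and $-\kappa$, it shows the two candidate drift integrals coincide q.s., so $(Y+\eta,Z,K)$ (resp.\ $(\lambda Y,\lambda Z,\lambda K)$) is \emph{exactly} the solution; your two-sided comparison argument reaches the same conclusion and is equally valid. Second, and this is the one step you should make precise: in (2)--(3) you keep $K+K'$ (resp.\ $\lambda K+(1-\lambda)K'$) as the ``martingale part'' and remark that it need not be a $G$-martingale, asserting that the hypotheses of the comparison theorem accommodate this. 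As stated, Theorem \ref{com} requires the $K^{l}$ to be decreasing $G$-martingales, so it does not literally apply to that decomposition; the fix --- which is exactly what the paper does --- is to absorb $-K'$ into the finite-variation process $V$: since $K'$ is decreasing, $-K'$ is increasing, so $V$ remains increasing and Theorem \ref{com} applies to $(Y+Y',Z+Z',K)$ against the solution for $\xi+\eta$. With that absorption spelled out (and the routine integrability/density reductions you already flag, which the paper itself glosses), your proof is complete.
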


\begin{proof}
(1) "$\Rightarrow$" part. For each fixed $t\in \lbrack0,T)$, $y$, $y^{\prime
}\in \mathbb{R}$, $z\in \mathbb{R}^{d}$, we take $$\xi_{\varepsilon}=y+\langle
z,h_{ij}\rangle(\langle B^{i},B^{j}\rangle_{t+\varepsilon}-\langle B^{i}%
,B^{j}\rangle_{t})+\langle z,B_{t+\varepsilon}-B_{t}\rangle~\textmd{and}~\eta=y^{\prime
}-y,$$ where $h_{ij}=h_{ji}\in \mathbb{R}^{d}$. Then, by Theorem \ref{re-the1}
and $\mathbb{\tilde{E}}_{t}[\xi_{\varepsilon}+\eta]=\mathbb{\tilde{E}}_{t}%
[\xi_{\varepsilon}]+\eta$, we can obtain%
\[
f(t,y^{\prime},z)+2G((g_{ij}(t,y^{\prime},z)+\langle z,h_{ij}\rangle
)_{i,j=1}^{d})=f(t,y,z)+2G((g_{ij}(t,y,z)+\langle z,h_{ij}\rangle)_{i,j=1}%
^{d}).
\]
We choose $h_{ij}$ such that $g_{ij}(t,y^{\prime},z)+\langle
z,h_{ij}\rangle=0$, which implies (\ref{som-eq2}).

"$\Longleftarrow$" part. Let $(Y,Z,K)$ be the solution of $G$-BSDE
(\ref{som-eq1}) corresponding to terminal condition $\xi$. We claim that
$(Y_{s}+\eta,Z_{s},K_{s})_{s\in \lbrack t,T]}$ is the solution of $G$-BSDE
(\ref{som-eq1}) corresponding to terminal condition $\xi+\eta$ on $[t,T]$. For
this we only need to check that for $s\in \lbrack t,T]$,
\begin{equation}
\int_{s}^{T}f(u,Y_{u},Z_{u})du+\int_{s}^{T}g_{ij}(u,Y_{u},Z_{u})d\langle
B^{i},B^{j}\rangle_{u}=\int_{s}^{T}f(u,Y_{u}+\eta,Z_{u})du+\int_{s}^{T}%
g_{ij}(u,Y_{u}+\eta,Z_{u})d\langle B^{i},B^{j}\rangle_{u}. \label{som-eq6}%
\end{equation}
By (\ref{som-eq2}) we can get
\begin{align*}
&  \int_{s}^{T}(g_{ij}(u,Y_{u},Z_{u})-g_{ij}(u,Y_{u}+\eta,Z_{u}))d\langle
B^{i},B^{j}\rangle_{u}-2\int_{s}^{T}G((g_{ij}(u,Y_{u},Z_{u})-g_{ij}%
(u,Y_{u}+\eta,Z_{u}))_{i,j=1}^{d})du\\
&  =\int_{s}^{T}(g_{ij}(u,Y_{u},Z_{u})-g_{ij}(u,Y_{u}+\eta,Z_{u}))d\langle
B^{i},B^{j}\rangle_{u}+\int_{s}^{T}(f(u,Y_{u},Z_{u})-f(u,Y_{u}+\eta
,Z_{u}))du\leq0,
\end{align*}
and%
\begin{align*}
&  \int_{s}^{T}(g_{ij}(u,Y_{u}+\eta,Z_{u})-g_{ij}(u,Y_{u},Z_{u}))d\langle
B^{i},B^{j}\rangle_{u}-2\int_{s}^{T}G((g_{ij}(u,Y_{u}+\eta,Z_{u}%
)-g_{ij}(u,Y_{u},Z_{u}))_{i,j=1}^{d})du\\
&  =\int_{s}^{T}(g_{ij}(u,Y_{u}+\eta,Z_{u})-g_{ij}(u,Y_{u},Z_{u}))d\langle
B^{i},B^{j}\rangle_{u}+\int_{s}^{T}(f(u,Y_{u}+\eta,Z_{u})-f(u,Y_{u}%
,Z_{u}))du\leq0,
\end{align*}
which implies (\ref{som-eq6}). The proof of (1) is complete.

(2) "$\Rightarrow$" part. For each fixed $t\in \lbrack0,T)$, $y$, $y^{\prime
}\in \mathbb{R}$, $z$, $z^{\prime}\in \mathbb{R}^{d}$, we consider
$\xi_{\varepsilon}=y+\langle z,h_{ij}\rangle(\langle B^{i},B^{j}%
\rangle_{t+\varepsilon}-\langle B^{i},B^{j}\rangle_{t})+\langle
z,B_{t+\varepsilon}-B_{t}\rangle$ and $\eta_{\varepsilon}=y^{\prime}+\langle
z^{\prime},h_{ij}^{^{\prime}}\rangle(\langle B^{i},B^{j}\rangle_{t+\varepsilon
}-\langle B^{i},B^{j}\rangle_{t})+\langle z^{\prime},B_{t+\varepsilon}%
-B_{t}\rangle$, where $h_{ij}=h_{ji}\in \mathbb{R}^{d}$, $h_{ij}^{\prime
}=h_{ji}^{\prime}\in \mathbb{R}^{d}$. Then, by Theorem \ref{re-the1} and
$\mathbb{\tilde{E}}_{t}[\xi_{\varepsilon}+\eta_{\varepsilon}]=\mathbb{\tilde
{E}}_{t}[\xi_{\varepsilon}]+\mathbb{\tilde{E}}_{t}[\eta_{\varepsilon}]$, we
obtain%
\begin{align*}
& f(t,y+y^{\prime},z+z^{\prime})+2G((g_{ij}(t,y+y^{\prime},z+z^{\prime
})+\langle z,h_{ij}\rangle+\langle z^{\prime},h_{ij}^{\prime}\rangle
)_{i,j=1}^{d})\\
&  \leq f(t,y,z)+f(t,y^{\prime},z^{\prime})+2G((g_{ij}(t,y,z)+\langle
z,h_{ij}\rangle)_{i,j=1}^{d})+2G((g_{ij}(t,y^{\prime},z^{\prime})+\langle
z^{\prime},h_{ij}^{\prime}\rangle)_{i,j=1}^{d}).
\end{align*}
We choose $h_{ij}$, $h_{ij}^{\prime}$ such that $g_{ij}(t,y,z)+\langle
z,h_{ij}\rangle=0$ and $g_{ij}(t,y^{\prime},z^{\prime})+\langle z^{\prime
},h_{ij}^{\prime}\rangle=0$, which implies (\ref{som-eq3}).

"$\Longleftarrow$" part. Let $(Y,Z,K)$ and $(Y^{\prime},Z^{\prime},K^{\prime
})$ be the solutions of $G$-BSDE (\ref{som-eq1}) corresponding to terminal
condition $\xi$ and $\eta$ respectively. Then $(Y+Y^{\prime},Z+Z^{\prime},K)$
solves the following $G$-BSDE:%
\begin{align*}
Y_{t}+Y_{t}^{\prime}  &  =\xi+\eta+\int_{t}^{T}f(s,Y_{s}+Y_{s}^{\prime}%
,Z_{s}+Z_{s}^{\prime})ds+\int_{t}^{T}g_{ij}(s,Y_{s}+Y_{s}^{\prime},Z_{s}%
+Z_{s}^{\prime})d\langle B^{i},B^{j}\rangle_{s}\\
&  +V_{T}-V_{t}-\int_{t}^{T}(Z_{s}+Z_{s}^{\prime})dB_{s}-(K_{T}-K_{t}),
\end{align*}
where
\begin{align*}
V_{t}  &  =-K_{t}^{\prime}-\int_{0}^{t}(f(s,Y_{s}+Y_{s}^{\prime},Z_{s}%
+Z_{s}^{\prime})-f(s,Y_{s},Z_{s})-f(s,Y_{s}^{\prime},Z_{s}^{\prime}))ds\\
&  -\int_{0}^{t}(g_{ij}(s,Y_{s}+Y_{s}^{\prime},Z_{s}+Z_{s}^{\prime}%
)-g_{ij}(s,Y_{s},Z_{s})-g_{ij}(s,Y_{s}^{\prime},Z_{s}^{\prime}))d\langle
B^{i},B^{j}\rangle_{s}\\
&  =-K_{t}^{\prime}-\{ \int_{0}^{t}(g_{ij}(s,Y_{s}+Y_{s}^{\prime},Z_{s}%
+Z_{s}^{\prime})-g_{ij}(s,Y_{s},Z_{s})-g_{ij}(s,Y_{s}^{\prime},Z_{s}^{\prime
}))d\langle B^{i},B^{j}\rangle_{s}\\
&  -2\int_{0}^{t}G((g_{ij}(s,Y_{s}+Y_{s}^{\prime},Z_{s}+Z_{s}^{\prime}%
)-g_{ij}(s,Y_{s},Z_{s})-g_{ij}(s,Y_{s}^{\prime},Z_{s}^{\prime}))_{i,j=1}%
^{d})ds\} \\
&  -\int_{0}^{t}\{f(s,Y_{s}+Y_{s}^{\prime},Z_{s}+Z_{s}^{\prime})-f(s,Y_{s}%
,Z_{s})-f(s,Y_{s}^{\prime},Z_{s}^{\prime})\\
&  +2G((g_{ij}(s,Y_{s}+Y_{s}^{\prime},Z_{s}+Z_{s}^{\prime})-g_{ij}%
(s,Y_{s},Z_{s})-g_{ij}(s,Y_{s}^{\prime},Z_{s}^{\prime}))_{i,j=1}^{d})\}ds.
\end{align*}
By (\ref{som-eq3}), it is easy to check that $V_{t}$ is an increasing process.
Then, by Theorem \ref{com}, we can get $\mathbb{\tilde{E}}_{t}[\xi+\eta
]\leq \mathbb{\tilde{E}}_{t}[\xi]+\mathbb{\tilde{E}}_{t}[\eta]$. The proof of
(2) is complete.

Finally, we could prove (3) as in (2) and (4) as in (1).
\end{proof}

\begin{proposition}
\label{som-pro2} We have

\begin{description}
\item[(1)] If $G(A)+G(-A)>0$ for any $A\in \mathbb{S}_{d}$ and $A\neq0$, then
(\ref{som-eq2}) holds if and only if $f$ and $g_{ij}$ are independent of $y$.

\item[(2)] If there exists an $A\in \mathbb{S}_{d}$ with $A\neq0$ such that
$G(A)+G(-A)=0$ and $G(A)\neq0$, then for any fixed $g(t,y,z)$ satisfying
(H1)-(H5), we have $f(t,y,z)=-2G(A)g(t,y,z)$ and $(g_{ij}(t,y,z))_{i,j=1}%
^{d}=g(t,y,z)A$ satisfying (\ref{som-eq2})$.$
\end{description}
\end{proposition}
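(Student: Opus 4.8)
The plan is to reduce both parts to two elementary facts about the sublinear function $G$: that subadditivity together with positive homogeneity pins down the behaviour of $G$ along a line $\mathbb{R}A$, and that equation (\ref{som-eq2}) becomes symmetric in $(y,y')$ after interchanging those two variables. In part (1) the "if" direction is immediate --- if $f$ and $g_{ij}$ are independent of $y$, both differences in (\ref{som-eq2}) vanish and the identity holds trivially --- so the content is the converse.

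For the "only if" direction of part (1), I would fix $t\in[0,T]$ and $z\in\mathbb{R}^d$, argue quasi-surely, and write (\ref{som-eq2}) once as it stands and once with $y$ and $y'$ interchanged. Adding the two identities cancels the $f$-terms and leaves, with $A:=(g_{ij}(t,y,z)-g_{ij}(t,y',z))_{i,j=1}^{d}$,
\[
G(A)+G(-A)=0.
\]
Since by hypothesis $G(B)+G(-B)>0$ for every nonzero $B\in\mathbb{S}_d$, this forces $A=0$, i.e. $g_{ij}(t,y,z)=g_{ij}(t,y',z)$; hence $g_{ij}$ is independent of $y$, and substituting back into (\ref{som-eq2}) gives $f(t,y,z)=f(t,y',z)$, so $f$ is independent of $y$ as well. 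Passing from "for all $y,y'$" to the statement about the functions is routine: fix the identity on a countable dense set of $(y,y')$ outside a single polar set and use the Lipschitz continuity in $y$ from (H2).

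For part (2), the crucial observation is that $G(A)+G(-A)=0$ forces $G$ to be linear along the whole line $\mathbb{R}A$: positive homogeneity gives $G(cA)=cG(A)$ for $c\ge 0$, while for $c<0$ one has $G(cA)=G(|c|(-A))=|c|G(-A)=-|c|G(A)=cG(A)$, so $G(cA)=cG(A)$ for all $c\in\mathbb{R}$. I would then set $f(t,y,z):=-2G(A)g(t,y,z)$ and $(g_{ij}(t,y,z))_{i,j=1}^{d}:=g(t,y,z)A$; being fixed scalar multiples of $g$, these inherit (H1)--(H5) (the Lipschitz constants and $M_G^\beta$-norms merely rescale, and $g(t,y,0)=0$ gives $f(t,y,0)=g_{ij}(t,y,0)=0$). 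With $c:=g(t,y,z)-g(t,y',z)$ one then has $(g_{ij}(t,y,z)-g_{ij}(t,y',z))_{i,j=1}^{d}=cA$, hence $2G(cA)=2cG(A)$, while $f(t,y,z)-f(t,y',z)=-2G(A)c$; the two add up to $0$, which is precisely (\ref{som-eq2}). The hypothesis $G(A)\ne 0$ serves only to ensure $f\not\equiv 0$, making this family a genuine counterpoint to part (1).

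I do not expect a real obstacle: each part is a one- or two-line computation with $G$. The only points needing a little care are the quasi-sure bookkeeping in part (1) when interchanging the universal quantifier over $(y,y')$ with the almost-sure equality, and the routine verification that the generators constructed in part (2) satisfy (H1)--(H5). Conceptually, strict sublinearity of $G$ in every direction forbids any $y$-dependence consistent with (\ref{som-eq2}), whereas a direction in which $G$ degenerates to a linear map leaves room for a one-parameter family of genuinely $y$-dependent generators satisfying it.
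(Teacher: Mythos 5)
Your proof is correct and follows essentially the same route as the paper: for (1) you symmetrize (\ref{som-eq2}) in $(y,y')$ to cancel the $f$-terms, deduce $G(A)+G(-A)=0$ for $A=(g_{ij}(t,y,z)-g_{ij}(t,y',z))_{i,j}$, conclude $A=0$ by the non-degeneracy hypothesis and then $f(t,y,z)=f(t,y',z)$ from (\ref{som-eq2}); the paper does exactly this with $y'=0$. For (2) the paper only says the verification is easy, and your computation (using $G(-A)=-G(A)$ to get $G(cA)=cG(A)$ for all $c\in\mathbb{R}$) is the intended one.
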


\begin{proof}
It is easy to verify (2), and we only need to prove (1). If (\ref{som-eq2}) holds, it is
easy to check that $G((g_{ij}(t,y,z)-g_{ij}(t,0,z))_{i,j=1}^{d})+G((g_{ij}%
(t,0,z)-g_{ij}(t,y,z))_{i,j=1}^{d})=0$ hold. Then, from the assumption, we get
$g_{ij}(t,y,z)=g_{ij}(t,0,z)$. Therefore, by (\ref{som-eq2}), we have $f(t,y,z)=f(t,0,z)$,
 which implies  $f$ and $g_{ij}$ are independent of $y$.
The converse part is obvious.
\end{proof}





\begin{thebibliography}{99}

\bibitem {Peng1997}Peng S. BSDE and related g-expectation, in Pitman
Research Notes in Mathematics Series, No. 364, Backward Stochastic
Differential Equation, N. El Karoui and L. Mazliak (edit.), 1997, 141-159

\bibitem {PP90}Pardoux E, Peng S. Adapted Solutions of
Backward Stochastic Equations.  Systerm and Control Letters, 1990, 14: 55-61

\bibitem{Chen1998}  Chen Z J. A property of backward stochastic differential equations. C R Acad Sci Paris S\'{e}r I Math, 1998, 4: 483-488

\bibitem{BCHMP} Briand P, Coquet F, Hu Y, M\'{e}min J, Peng S G. A converse comparison theorem for BSDEs and related properties of g-expectation.
Elect Comm in Probab, 2000, 5: 101-117

\bibitem{Jiang2005} Jiang L. Representation theorem for generators of BSDEs.
C R Acad Sci Paris s\'{e}r I Math, 2005, 340(2): 161-166

\bibitem{Jiang2005b} Jiang L. Converse comparison theorems for BSDEs.
Statistics and Probability Letters, 2005, 71(2): 173-183

\bibitem{Jiang2008} Jiang L. Convexity, translation invariance and subadditivity for g-expectations and related risk measures.
The Annals of Applied Probability, 2008, 18(1): 245-258


\bibitem {HJPS}Hu M S, Ji S L, Peng S G, Song Y S. Backward
Stochastic Differential Equations Driven by $G$-Brownian Motion. 2012,
arXiv:1206.5889v1 [math.PR]

\bibitem {HJPS1}Hu M S, Ji S L, Peng S G, Song Y S. Comparison theorem,
Feynman-Kac formula and Girsanov transformation for BSDEs driven by
$G$-Brownian motion. 2012, arXiv:1212.5403v1 [math.PR]

\bibitem {P07a}Peng S G. $G$-expectation, $G$-Brownian Motion and
Related Stochastic Calculus of It\^o type. Stochastic analysis and
applications, 2007, 541-567, Abel Symp, 2, Springer, Berlin

\bibitem {P07b}Peng  S G.$G$-Brownian Motion and Dynamic Risk
Measure under Volatility Uncertainty. 2007, arXiv:0711.2834v1 [math.PR]

\bibitem {P08a}Peng S G. Multi-Dimensional $G$-Brownian Motion and
Related Stochastic Calculus under $G$-Expectation. Stochastic Processes and
their Applications,  007, 118(12): 2223-2253

\bibitem {P08b}Peng S G. A New Central Limit Theorem under
Sublinear Expectations. 2008, arXiv:0803.2656v1 [math.PR]

\bibitem {P10}Peng S G. Nonlinear Expectations and Stochastic
Calculus under Uncertainty. 2010 arXiv:1002.4546v1 [math.PR]

\bibitem {Song11} Song Y S. Some properties on G-evaluation and its
applications to G-martingale decomposition. Science China Mathematics, 2011, 54(2): 287-300


%
%
%
%
%
%
%
%
%
%
%
%
%
%
%
%
%
%
%
%
%
%
%
%
%
%
%
%
%
%
%
%
%
%
%
%
%
%
%
%
%
%
\end{thebibliography}
\end{document}